\newtheorem{theorem}{Theorem}[section]
\newtheorem{lemma}[theorem]{Lemma}
\theoremstyle{definition}
\newtheorem{definition}[theorem]{Definition}
\newtheorem{cor}[theorem]{Corollary}
\theoremstyle{remark}
\numberwithin{equation}{section}
  \newcommand{\R}{\mathbb R}
  \newcommand{\C}{\mathbb C}
 \newcommand{\Z}{\mathbb{Z}}
  \newcommand{\Rr}{\mathcal{R}}
  \renewcommand{\L}{\mathcal{L}}
  \newcommand{\T}{\mathbb{T}}
\newcommand{\ts}{\hspace{0.5pt}}
\newcommand{\dd}{\,{\rm d}}
\newcommand{\MM}{\mathcal{M}(G)}
\newcommand{\MTB}{\mathcal{M}^{\infty}(G)}
\newcommand{\Oomega}{(X,G)}
\newcommand{\Ttheta}{(Y,G)}
\newcommand{\Hm}[1]{\leavevmode{\marginpar{\tiny%
$\hbox to 0mm{\hspace*{-0.5mm}$\leftarrow$\hss}%
\vcenter{\vrule depth 0.1mm height 0.1mm width \the\marginparwidth}%
\hbox to
0mm{\hss$\rightarrow$\hspace*{-0.5mm}}$\\\relax\raggedright #1}}}
\begin{document}
\title{Equicontinuous Delone dynamical  systems}

\author{Johannes Kellendonk}
\address{Universit\'e de Lyon, Universit\'e Claude Bernard Lyon 1,
Institut Camille Jordan, CNRS UMR 5208, 43 boulevard du 11 novembre
1918, F-69622 Villeurbanne cedex, France}
\email{kellendonk@math.univ-lyon1.fr}

\author{Daniel Lenz}
\address{Mathematisches Institut, Friedrich-Schiller Universit\"at Jena, Ernst-Abb\'{e} Platz~2, D-07743 Jena, Germany}
\email{ daniel.lenz@uni-jena.de }



\date{\today}

\begin{abstract}
We characterize equicontinuous Delone dynamical systems as those coming from Delone sets with strongly almost periodic Dirac combs.  Within the class of  systems with finite local complexity the only   equicontinuous systems  are then shown to be the  crystalline ones. On the other hand, within the class without finite local complexity, we exhibit examples of   equicontinuous  minimal  Delone dynamical  systems which are not crystalline.
 Our results solve the problem posed  by  Lagarias whether a   Delone set whose Dirac comb is strongly almost periodic must be crystalline.
\end{abstract}

\maketitle

\begin{center}
\emph{Dedicated to Robert V. Moody  on the occasion of his 70th birthday}
\end{center}

\bibliographystyle{amsalpha}

\tableofcontents

\newcommand{\bG}{\overline{G}}

\section*{Introduction}
The study of disordered systems is one of the most prominent issues in mathematics and physics today. A special  focus concerns   aperiodic order i.e.\  the very 'border' between order and disorder (see e.g. the monographs and conference proceedings \cite{BMed,Jan,Mbook,Pat,Sen}). This topic is particularly  relevant  for various reasons:  On the one hand, the actual discovery of physical substances \cite{INF,SBGC}, later called quasicrystals,  lying at this border has triggered enormous research activities in the experimental and theoretical description of low complexity systems.  On the other hand,  aperiodic order has  come up  in disguise in various contexts in
 purely conceptually motivated studies.

 As for such  conceptual studies of almost periodic order (in disguise), we mention work of Hedlund/Morse on Sturmian systems and  on complexity of aperiodic sequences  \cite{MH1,MH2}. Geometric analogues  have  recently  been  studied by Lagarias \cite{Lag0,Lag1} and Lagarias/Pleasants \cite{LP1,LP2}. Very loosely speaking, the corresponding results show that  in terms of suitable complexity notions there is a gap between the ordered world and the disordered world. Another approach to (dis)order in a spirit of Fourier analysis has been given in Meyer's work on harmonious sets \cite{Mey}. There, a  basic aim is to find and study a class of sets allowing for a Fourier type expansion. Meyer's considerations have been taken up  by  Moody \cite{Moo1,Moo2}. The corresponding results have become a cornerstone  in the study of  diffraction  aspects of  aperiodic order initiated by Hof \cite{Hof}. 

Of course, there are connections between the  geometry and complexity based approach  to  aperiodic order and the Fourier analytic side of the picture. For example, pure pointedness on the Fourier side implies zero entropy \cite{BLR}. A crucial link is provided by concepts of almost periodicity. On the one side almost periodicity is by its very definition a geometric concept about how the systems 'repeats itself'. On the other side, it is an important ingredient in all Fourier analytic considerations concerning pure point measures.
Almost periodicity has always played a role in the theory, see e.g. Solomyak \cite{Boris1} and Queffelec \cite{Que}. Still, it seems that only recently with the work of Baake/Moody \cite{BM} and subsequent work as e.g.  \cite{MS,Gou1,Gou2,LS,LR,Str}  the role of almost periodicity in diffraction theory  begins to be properly ascertained.

A most basic question in this context is whether it is possible to characterize order  by suitably strong forms of almost periodicity.  An affirmative answer to one version of this question is   given in the so-called C\'{o}rdoba Theorem \cite{Cord} (see \cite{Lag} for a generalization as well).
Another  version of this question  has been  posed  by Lagarias in \cite{Lag}. More precisely,  (with notions to be explained later), Problem 4.4 in \cite{Lag} asks  whether  a  Delone set whose  associated Dirac comb is strongly almost periodic is necessarily completely periodic.

The main  aim in this paper is  to answer this question. More specifically, we will show the following two points:

\begin{itemize}

\item For Delone sets with finite local complexity complete periodicity is indeed equivalent to strong almost periodicity of the associated Dirac comb (Corollary \ref{cor-Lagarias-true}).

\item There exist Delone sets (without finite local complexity) which are strongly almost periodic but not completely periodic (Corollary \ref{cor-Lagarias-wrong}).
\end{itemize}

So, in some  sense we show that the answer to the problem is both yes and no.

\smallskip

In order to avoid confusion let us mention that our notation differs from the notation of \cite{Lag} in the following way: In line with \cite{GdeL} and recent work on almost periodicity such as \cite{BM,MS,LS,LR} we use the term strongly  almost periodic measure for measures called uniformly  almost periodic in \cite{Lag}. We use the term crystalline or completely periodic for what is called ideal crystal in \cite{Lag}.

\smallskip

The result given in the  second point above  is treated in Section \ref{How-Failure} by providing a class of models based on the Kronecker flow on the two dimensional torus (Corollary \ref{cor-Lagarias-wrong}).

As for the result given in the  first point, it is quite clear that complete periodicity implies both finite local complexity and strong almost periodicity of the associated Dirac comb. Thus, the main work is to show the converse. Under the stronger assumption of Meyer property (instead of only finite local complexity) the result is already known. It  can be found in the recent work \cite{Stru2} and could also rather directly  be   inferred from the earlier  \cite{BLM} (see \cite{LR} as well for results in a very similar spirit).

Our approach  to the second point  relies on   three ingredients: The first ingredient  is the theory of equicontinuous systems discussed in Section \ref{Maximal}. We say that a Delone set is equicontinuous if
its associated dynamical system is equicontinuous.
 Equicontinuity of a dynamcial system is equivalent to the system being a rotation on a compact abelian  group (Ellis theorem, see Theorem~\ref{cor-equi}). A second ingredient are results of \cite{LR} discussed in Section \ref{Almost}  giving that a measure dynamical system is a rotation on a compact abelian group  if and only if the measures are strongly almost periodic (Theorem \ref{char-ap}). Combining these two pieces for Delone dynamical systems, we find that strong almost periodicity of the  associate Dirac comb is equivalent to equicontinuity of the system (Theorem \ref{thm-Delone-ec-general}).  Now, the third ingredient is  a  variant of a recent reasoning  of Barge/Olimb \cite{BO} presented in Section \ref{Proof}. It gives  that equicontinuity of a Delone dynamical system with finite local complexity is in fact equivalent to complete periodicity (Theorem \ref{thm-Delone-FLC}). Put together these considerations easily give  the desired result.


We present both, the  theory of equicontinuous systems as well as the considerations on strongly almost periodic Delone dynamical systems in somewhat more detail than needed for the actual answer to the question of Lagarias. The reason is that we   believe that these considerations may be of importance for future  study as well.

\section{Preliminaries on dynamical systems}
We deal with compact dynamical systems whose acting group is locally compact abelian.
An important special case are dynamical systems consisting of measures on the group.  The necessary notation is introduced in this section.

\bigskip

The space of continuous functions on a topological space
$X$ is denoted by $C(X)$ and  the subspace of continuous functions with compact
support by $C_c (X)$ and the space of continuous bounded functions by $C_b
(X)$.
We will deal with spaces $X$ which are  locally
compact $\sigma$-compact  Hausdorff spaces. Then,   $X$ carries the Borel $\sigma$-algebra generated
by all closed subsets of $X$ and by the Riesz-Markov representation
theorem (see e.g. \cite{Ped}), the set $\mathcal{M} (X)$ of all complex regular Borel
measures on $X$ can  be identified with the dual space $C_c
(X)^\ast$ of complex valued, linear functionals on $C_c(X)$ which are
continuous with respect to a suitable topology (see e.g. \cite[Ch.\ 6.5]{Ped} for details).  We then  write
$$\int_X f \dd\mu =
\mu(f)$$
 for $f\in C_c(X).$
The space $\mathcal{M} (X)$ carries the
\textit{vague topology}, which is  the weakest topology that makes all functionals
$\mu\mapsto \mu(\varphi)$, $\varphi\in C_c (X)$, continuous.

\smallskip

 The group operation of an locally compact abelian group
will  mostly be written additively as $+$.  The neutral element  will be denoted by $e$.
Now, let $G$ be a $\sigma$-compact locally compact
abelian group.
Whenever $G$
acts on the compact space $X$  by a continuous action
\begin{equation*}
 \alpha : \; G\times X \; \longrightarrow \; X
   \, , \quad (t,\omega) \, \mapsto \,  \alpha_t \omega\, ,
\end{equation*}
where $G\times X$ carries the product topology, the pair
$\Oomega$ is called a {\em topological dynamical system\/} over $G$. We will mostly suppress  the action $\alpha$ in our notation and write
$$ t\cdot \omega := \alpha_t \omega.$$
A dynamical system  $\Oomega$ is called {\em
minimal\/} if, for all $\omega\in X$, the $G$-orbit
$\{t\cdot \omega : t \in G\}$ is dense in $X$.

\begin{definition}(Factor)
  Let two topological dynamical systems\/ $\Oomega$ and\/ $\Ttheta$
  under the action of $G$ be given.  Then, $\Ttheta$ is called a\/
  {\em factor} of $\Oomega$, with factor map\/ $\varrho$, if\/
  $\varrho \! : X \longrightarrow Y $ is a continuous
  surjection with $\varrho (t\cdot \omega) = t \cdot
  (\varrho (\omega))$ for all\/ $\omega\in X$ and\/ $t\in G$.
\end{definition}

We will be concerned with special dynamical systems in which $X$ is a compact group. In order to simplify the notation, we introduce the following notation for these systems.

\begin{definition} (Rotation on a compact abelian group) A dynamical system $(X,G)$ is called a
rotation on a compact abelian group
if $X$ is a compact abelian group and the action of $G$ on $X$  is induced by a homomorphism $\iota : G\longrightarrow X$  such that $t \cdot x = \iota (t) +  x$ for all $t\in G$ and $ x\in X$.
\end{definition}

\newcommand{\UN}{\mathcal U}

\section{Equicontinuous systems and proximality}\label{Maximal}
In this section we recall some aspects of the theory of equicontinuous systems and proximality.
The importance of the proximality relation for the dynamical systems defined by tilings or Delone sets seems only to have recently been emphazised. In \cite{BargeDiamond} it is shown that topological closure of the proximality relation is a necessary and sufficient condition for (strong) Pisot substitution tilings to have pure point dynamical spectrum. This has been generalized in \cite{BargeKellendonk}. Related to proximality are the notions of asymptotic
tilings and augmentation which have led to finer topological invariants for substitution tilings \cite{BargeSmith}. Whereas the above mentionned work concerns the case in which the proximality relation is non-trivial, we consider here the
opposite case, namely when this relation is trivial.

We refer the reader to Auslander's book \cite{Aus} for more detailled information.
The theory is cast within the framework of uniform structures
instead of more conventional (but more restrictive) metric topologies.

\smallskip

A uniform structure $\UN$, see \cite{Kel} for details, is a family of subsets of $X\times X$ satisfying
\begin{enumerate}
\item the diagonal $\Delta = \{ (x, x) : x \in X \}$ is contained in all $U\subset \UN$,
\item with $U$ any superset $V\supset U$ is contained in $\UN$,
\item with $U$ and $V$ also $U\cap V$ belong to $\UN$,
\item with $U$ also $\{(y,x):(x,y)\in U\}$ belongs to $\UN$,
\item for all $U \in\UN$ exists $V \in \UN$  such that, whenever $(x, y),(y, z)\in V$, then $(x, z) \in U$.
\end{enumerate}
A uniform structure $\UN$ generates as unique toplogy such that for each $x\in X$ the sets $V_x :=\{ y : (x,y)\in V\}$ (for $V\in \UN$) are a neighborhood basis for $x$.  A base of a uniformity structure $\UN$ is any subfamily $\UN'\subset \UN$ such that all members of $\UN$ are supersets of some menber of $\UN'$.

\smallskip

Uniform structures have already been used in tiling theory. In fact,  Schlottmann  develops basic theory of  points sets (with finite local complexity)  on locally compact abelian groups in the framework of uniform structures in \cite{Schl}. For arbitrary point sets a discussion can be found in \cite{BL} (see \cite{BLM} as well). Note also that the construction of a cut and project scheme  out of a suitable autocorrelation measure  in \cite{BM} is  based on uniform structures.

The reader who is not familiar with uniform structures may at first consider the case of a metric space
whose uniformity structure has a base
given by the $\epsilon$-diagonals $\Delta_\epsilon := \{ (x,y)\in X\times X: d(x,y)<\epsilon\}$.
A map $f:X\to X$ is uniformly continuous (with respect to a given uniformity structure $\UN$ of $X$)
if for all $V\in\UN$ we find $U\in\UN$ such that $(f\times f) (U)\subset V$.
A family of maps $\mathcal F$ from $X$ to itself is called equicontinuous if for all $V\in\UN$ we find $U\in\UN$ such that for all $f\in \mathcal F$: $(f\times f)(U)\subset V$. These notions reduce to the conventional ones in a metric space provided one used the fundamental system given by the $\epsilon$-diagonals. On a compact space all uniformity structures coincide.

\bigskip

Consider a minimal dynamical system $(X,G)$ where  $X$ is a compact Hausdorff space and $G$ a locally compact abelian group acting by $\alpha$ on $X$.
If the action is free then $X$ can be seen as a compactification of $G$: it is the completion of one orbit and this orbit is a copy of $G$. One might ask when is $X$ a group compactification, that is, when does $X$ carry a group structure such that the orbit is a subgroup isomorphic to $G$, or, in other words, when is $(X,G)$ a rotation on a compact abelian group?
 The question of when this happens is to the equicontinuity of the action.
\begin{definition}
The dynamical system $(X,G,\alpha)$ is called equicontinuous
if the family of homeomorphisms $\{\alpha_t \}_{t\in G}$ is equicontinuous.
\end{definition}
This reduces to the usual definition if $(X,d)$ is a metric space: $$\forall \epsilon\exists \delta \forall x,y\in X: d(x,y)<\delta \Rightarrow \forall t\in G: d(\alpha_t(x),\alpha_t(y))<\epsilon.$$
An equicontinuous metrizable system admits always an invariant metric which induces the same topology. Indeed one can just take
$$\overline{d}(x,y) := \sup_{t\in G} d(t \cdot x,t \cdot y).$$
Likewise, any compact metrizable abelian group $\T$ admits a left invariant metric: Whenever $d$ is a metric  then
$\overline{d}(x,y) := \sup_{t\in \T} d(x+t,y+t)$ is a metric on $\T$ which induces the same topology and
 is $G$-invariant.

We next provide a  useful necessary condition for a system to be equicontinuous.
\begin{definition}
[Proximality] Consider a $G$ action on a compact Hausdorff space $X$.
Two points $x,y\in X$ are proximal if for all $U\in \UN $ there exists a $t\in G$ such that $(t\cdot x,t\cdot y)$ belongs to  $U$.
\end{definition}
In the context of compact metric spaces $(X,d)$ this translates into saying that $x,y$ are proximal whenever
$$\inf_{t\in G} d(t\cdot x,t\cdot y)=0.$$
\begin{cor} If $(X,G)$ is equicontinuous then
the proximal relation is trivial.
\end{cor}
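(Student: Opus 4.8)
The plan is to prove directly that, under equicontinuity, any proximal pair $(x,y)$ must satisfy $x=y$, so that the proximal relation reduces to the diagonal $\Delta$. Suppose, for contradiction, that $x$ and $y$ are proximal but $x\neq y$. Since $X$ is compact Hausdorff, its unique compatible uniform structure separates points, i.e.\ $\bigcap_{U\in\UN}U=\Delta$ (a standard fact for uniform spaces, see \cite{Kel}); hence there exists some $V\in\UN$ with $(x,y)\notin V$. This $V$ is the object that distinguishes $x$ from $y$ and against which the rest of the argument will derive a contradiction.

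Next I would feed this $V$ into the two hypotheses in turn, exploiting the opposite orderings of their quantifiers. Equicontinuity of the family $\{\alpha_t\}_{t\in G}$ supplies a $U\in\UN$ with $(\alpha_t\times\alpha_t)(U)\subset V$ for \emph{every} $t\in G$. Proximality of $x$ and $y$, applied to this same $U$, supplies \emph{some} $s\in G$ with $(s\cdot x,\,s\cdot y)\in U$. The key step is then to undo the group element $s$: since the action is by a group, $\alpha_{-s}\circ\alpha_s=\alpha_e=\mathrm{id}$, and applying the map $\alpha_{-s}$ (legitimate because $t$ in the equicontinuity bound ranges over the full group $G$) yields
$$(x,y)=\bigl(\alpha_{-s}(s\cdot x),\,\alpha_{-s}(s\cdot y)\bigr)\in(\alpha_{-s}\times\alpha_{-s})(U)\subset V,$$
contradicting $(x,y)\notin V$. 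Hence $x=y$, and the proximal relation is trivial.

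I do not expect a serious obstacle here; the proof is a short chaining of the "for all $V$ there is $U$" statement of equicontinuity with the "for all $U$ there is $s$" statement of proximality. The one point genuinely requiring care is the very first step, namely that on a compact Hausdorff space the uniformity separates points so that a distinguishing $V$ exists at all. This is exactly where the Hausdorff property enters, and it is what makes the conclusion "trivial proximal relation" mean genuine equality $x=y$ rather than mere topological indistinguishability. The only other mild subtlety is the use of the inverse element $-s$, which is available precisely because equicontinuity quantifies over all of $G$.
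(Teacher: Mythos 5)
Your proof is correct, and it coincides with the paper's (implicit) argument: the corollary is stated there without proof precisely because it follows by exactly this quantifier chaining, and your write-up is the standard expansion of that immediacy. The two points you flag as requiring care are indeed the right ones --- the existence of a separating entourage $V$ on a compact Hausdorff space (where $\bigcap_{U\in\mathcal U}U=\Delta$), and the fact that equicontinuity provides a single $U$ working for \emph{all} $t\in G$, so that applying $\alpha_{-s}$ to the pair $(s\cdot x, s\cdot y)\in U$ supplied by proximality lands $(x,y)$ in $V$, the desired contradiction.
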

Moreover, a system with non-trivial proximality relation cannot carry an invariant metric which is compatible with the topology.

An equicontinuous dynamical system need not be minimal but a transitive equicontinuous dynamical system is always minimal. So in the context of Delone and tiling dynamical  systems equicontinuous systems are always minimal.
The following theorem due to Ellis is of great importance for our results. Recall that we suppose that
$(X,G)$ is minimal.
\begin{theorem}
\label{cor-equi}
$(X,G)$ 
is conjugate to a minimal rotation on a compact abelian group if and only if it is equicontinuous.
\end{theorem}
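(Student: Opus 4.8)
The plan is to prove the two implications separately, with essentially all the work concentrated in the direction \emph{equicontinuous} $\Rightarrow$ \emph{rotation}, handled through Ellis's enveloping semigroup (see Auslander \cite{Aus}). For the easy converse, suppose $\Oomega$ is conjugate to a minimal rotation $t\cdot x=\iota(t)+x$ on a compact abelian group $X$. The translations $x\mapsto a+x$ preserve the unique (translation-invariant) uniformity of the compact group $X$, so $(\alpha_t\times\alpha_t)(V)=V$ for every entourage $V$; hence $\{\alpha_t\}_{t\in G}$ is trivially equicontinuous. Since on a compact space all uniformities coincide and a conjugacy is automatically a uniform isomorphism, equicontinuity transfers across the conjugacy.

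For the forward direction, let $\Oomega$ be minimal and equicontinuous, and form the closure $H$ of $\{\alpha_t:t\in G\}$ inside the space of continuous self-maps of $X$. By equicontinuity the topologies of pointwise and of uniform convergence agree on this family and on its closure, and by the Arzel\`a--Ascoli theorem (in its uniform-structure formulation, valid because $X$ is compact) the set $H$ is compact and consists of continuous maps. Each element of $H$ is in fact a homeomorphism: the family of inverses $\{\alpha_t^{-1}\}=\{\alpha_{-t}\}=\{\alpha_t\}$ coincides with the original family and is therefore equicontinuous as well, which forces uniform limits of the $\alpha_t$ to be invertible with continuous inverse. Together with closure under composition (uniform limits of $\alpha_t\circ\alpha_s=\alpha_{t+s}$) and compactness, this makes $H$ a compact topological group under composition; it is abelian because $G$ is and composition is continuous on $H$.

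Now fix a base point $\omega\in X$ and consider $\Phi\colon H\to X$, $h\mapsto h(\omega)$, which is continuous. Its image is compact, hence closed, and contains the dense $G$-orbit of $\omega$ by minimality, so $\Phi$ is surjective and $H$ acts transitively. The action is also free: if $h(\omega)=\omega$, then for any $x\in X$ pick a net $(t_i)$ with $\alpha_{t_i}(\omega)\to x$; using continuity of $h$ and commutativity of $H$ we get $h(x)=\lim h\alpha_{t_i}(\omega)=\lim\alpha_{t_i}h(\omega)=\lim\alpha_{t_i}(\omega)=x$, so $h=\mathrm{id}$. Thus $\Phi$ is a continuous bijection from a compact space to a Hausdorff space, hence a homeomorphism. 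Transporting the group structure of $H$ to $X$ via $\Phi$ turns $X$ into a compact abelian group with neutral element $\omega$, and with $\iota(t):=\alpha_t(\omega)$ the identity $\alpha_t(\Phi(h))=(\alpha_t\circ h)(\omega)=\Phi(\alpha_t\cdot h)$ shows that the action becomes $t\cdot x=\iota(t)+x$, a rotation; since $\iota$ has dense range by minimality, the rotation is minimal, and $\Phi$ is an equivariant homeomorphism, i.e.\ a conjugacy.

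The main obstacle is establishing that $H$ is genuinely a compact topological group of homeomorphisms: that equicontinuity upgrades pointwise convergence to uniform convergence, that limits remain homeomorphisms, and that composition and inversion stay continuous in the abstract uniform-structure setting rather than a metric one. This is precisely the content of Ellis's theorem and must be carried out with the intrinsic uniformity of $X$. The freeness step, though short, is the other place where both minimality and commutativity of $G$ are genuinely used.
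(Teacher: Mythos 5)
Your proof is correct, and it follows exactly the route the paper itself points to: the paper states this as Ellis's theorem with a citation to Auslander \cite{Aus} and only sketches how the group structure arises (extending $t_1\cdot x_0 + t_2\cdot x_0 := (t_1+t_2)\cdot x_0$ from an orbit), which is precisely the group law your homeomorphism $\Phi$ transports from the enveloping group $H$ to $X$. In other words, you have supplied in full the standard Ellis-group argument that the paper deliberately leaves to the reference, with the key steps (Arzel\`a--Ascoli in the uniform-structure setting, invertibility of limits, freeness via minimality and commutativity) all handled correctly.
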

If $(X,G)$ is equicontinuous the group structure on $X$ arises as follows: given any point $x_0\in X$ the operation $t_1\cdot x_0 + t_2\cdot x_0 := (t_1 + t_2)\cdot x_0$ extends to an addition in $X$ so that
$X$ becomes a group with $x_0$ as neutral element. Conversely any rotation on a compact abelian group  is obviously  equicontinuous.

To present the following characterization of equicontinuity in the context of metrizable dynamical systems we need some further  notation:  Let $(X,G)$ be a dynamical system and $d$ a metric on $X$. Then, the $\epsilon$-ball around $x\in X$ is denoted by $B_\epsilon (x)$. The elements of
 $$\Rr(x,\epsilon):=\{t\in G: t\cdot x\in B_\epsilon(x)\}$$
are called \textit{return vectors} to $B_\epsilon (x)$.
 Now, $(X,G)$ is called
\textit{uniformly almost periodic} if, for any $\epsilon>0$ the joint set of return vectors to $\epsilon$-balls, given by
$$A = \bigcap_{x\in X} \Rr(x,\epsilon)$$
is relatively dense (i.e.\ there exists a compact $K$ with $A + K = G$).
\begin{theorem}[\cite{Aus}]\label{thm-Auslander-equi}
$(X,G)$ is equicontinuous if an only if it is uniformly almost periodic.
\end{theorem}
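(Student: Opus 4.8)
The plan is to prove the two implications separately: for ``equicontinuous $\Rightarrow$ uniformly almost periodic'' I would invoke the already established Ellis theorem (Theorem~\ref{cor-equi}), while the converse admits a direct triangle-inequality argument. A preliminary remark is that on the compact space $X$ all topologically equivalent metrics generate the same uniform structure, so I may replace the given metric by any convenient one without affecting either equicontinuity or the relative density of the joint return sets $A=\bigcap_{x}\Rr(x,\epsilon)$, $\epsilon>0$.

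\emph{Equicontinuous implies uniformly almost periodic.} Assuming $(X,G)$ equicontinuous, Theorem~\ref{cor-equi} lets me assume $X$ is a compact abelian group $\T$ with $t\cdot x=\iota(t)+x$ for a homomorphism $\iota\colon G\to\T$ of dense image (minimality), equipped with the translation-invariant metric $\overline d$ recalled above. The decisive simplification is that $\overline d(t\cdot x,x)=\overline d(\iota(t)+x,x)=\overline d(\iota(t),e)$ no longer depends on $x$, so that
$$A=\bigcap_{x\in X}\Rr(x,\epsilon)=\{t\in G:\overline d(\iota(t),e)<\epsilon\}=\iota^{-1}\big(B_\epsilon(e)\big).$$
Since $B_\epsilon(e)$ is an open neighbourhood of $e$ and $\iota(G)$ is dense in the compact group $\T$, the translates $\iota(g)+B_\epsilon(e)$ cover $\T$, and compactness yields a finite subcover indexed by $g_1,\dots,g_n$. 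Setting $K=\{g_1,\dots,g_n\}$, any $g\in G$ satisfies $\iota(g)\in\iota(g_i)+B_\epsilon(e)$ for some $i$, hence $\iota(g-g_i)\in B_\epsilon(e)$, i.e.\ $g-g_i\in A$; thus $A+K=G$ and $A$ is relatively dense.

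\emph{Uniformly almost periodic implies equicontinuous.} Conversely, fix $\epsilon>0$ and apply the hypothesis to $\epsilon/3$, producing a compact $K$ with $A+K=G$ for $A=\bigcap_x\Rr(x,\epsilon/3)$. Each $a\in A$ obeys $d(a\cdot x,x)<\epsilon/3$ for all $x$, so $\alpha_a$ is uniformly $(\epsilon/3)$-close to the identity. As $K$ is compact and the action jointly continuous, the family $\{\alpha_k\}_{k\in K}$ is equicontinuous, giving $\delta>0$ with $d(x,y)<\delta\Rightarrow d(k\cdot x,k\cdot y)<\epsilon/3$ for all $k\in K$. Writing $t=a+k$ with $a\in A$, $k\in K$ and using $\alpha_t=\alpha_a\circ\alpha_k$, the estimate
$$d(t\cdot x,t\cdot y)\le d(\alpha_a\alpha_k x,\alpha_k x)+d(\alpha_k x,\alpha_k y)+d(\alpha_k y,\alpha_a\alpha_k y)<\frac{\epsilon}{3}+\frac{\epsilon}{3}+\frac{\epsilon}{3}$$
holds whenever $d(x,y)<\delta$, uniformly in $t\in G$; this is precisely equicontinuity.

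The step I expect to be the real obstacle is the forward direction, and more precisely the reduction of the intersection $\bigcap_x\Rr(x,\epsilon)$ to a single base-point-free set. Mere subadditivity and boundedness of the displacement $t\mapsto\sup_x d(t\cdot x,x)$ do not force its small sublevel sets to be relatively dense --- on $\R$ one can exhibit bounded subadditive displacements with bounded sublevel sets --- so equicontinuity has to enter globally rather than just through continuity at $e$. This is exactly the role of Ellis's theorem: the group structure and the fully translation-invariant metric collapse the intersection to the Bohr-type set $\iota^{-1}(B_\epsilon(e))$, after which relative density follows from compactness and density of $\iota(G)$.
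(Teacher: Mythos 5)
Your proof is correct, and it is worth noting at the outset that the paper offers no proof of this statement at all: Theorem~\ref{thm-Auslander-equi} is quoted from Auslander's book \cite{Aus}, so there is no internal argument to compare against. Checking your two directions: the backward implication is the classical three-epsilon argument and is complete as written --- the key facts you use, namely that $a\in A$ gives $d(a\cdot z,z)<\epsilon/3$ \emph{simultaneously} for all $z$ (so in particular for $z=\alpha_k x$ and $z=\alpha_k y$), and that joint continuity of the action on the compact set $K\times X$ makes $\{\alpha_k\}_{k\in K}$ equicontinuous, are both sound, and the decomposition $t=a+k$ is exactly what the paper's definition of relative density ($A+K=G$) provides. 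The forward implication via Theorem~\ref{cor-equi} is also correct, with one hypothesis you should make explicit: Ellis's theorem as stated in the paper carries the standing assumption that $(X,G)$ is minimal (announced just before Theorem~\ref{cor-equi}), and your argument uses minimality twice --- once to invoke the conjugacy, and once for density of $\iota(G)$ in $\mathbb{T}$. Within the paper's framework this is harmless, since the same standing assumption governs Theorem~\ref{thm-Auslander-equi}, but the statement would require a different treatment (via the maximal equicontinuous structure) without minimality. Your preliminary reduction --- that the unique uniformity on a compact space makes relative density of the joint return sets $\bigcap_x \Rr(x,\epsilon)$ invariant under change of compatible metric and under conjugacy --- is precisely the right justification for passing to the invariant metric $\overline{d}$.

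Compared with the standard proof in \cite{Aus}, your route differs in where the group structure lives: Auslander runs the forward direction through the enveloping group, i.e.\ the closure of $\{\alpha_t\}_{t\in G}$ in the uniformity of uniform convergence on $C(X,X)$, which is compact by Ascoli precisely because of equicontinuity, and then extracts relative density of $\{t:\sup_x d(t\cdot x,x)<\epsilon\}$ from compactness of that group and density of the image of $G$ in it. Your argument transplants the same mechanism onto $X$ itself via Theorem~\ref{cor-equi}: the invariant metric collapses $\bigcap_x\Rr(x,\epsilon)$ to the single Bohr-type set $\iota^{-1}(B_\epsilon(e))$, and the finite-subcover step is the same compactness-plus-density argument. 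What your version buys is economy inside this paper --- it reuses a result already on the page rather than introducing the enveloping semigroup; what Auslander's version buys is independence from Ellis's theorem (indeed it is one of the inputs to proving it). Your closing remark correctly identifies the crux: boundedness and subadditivity of the displacement $t\mapsto\sup_x d(t\cdot x,x)$ alone do not yield relative density of its small sublevel sets, so some compact group completion of the acting translations --- whether $\mathbb{T}$ or the enveloping group --- genuinely has to enter.
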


\section{Almost periodic measures on locally compact abelian groups}\label{Almost}
We will be concerned with dynamical systems built from measures. We will show that equicontinuity of such a system is equivalent to  almost periodicity of the underlying measure (Theorem \ref{char-ap}). This provides a characterization of equicontinuity in this framework.  The considerations of this section can be understood as a (slight) reformulation of results obtained in \cite{LR}. Alternatively,
they could  - at least partly - be based on Theorem \ref{thm-Auslander-equi}.  A thorough study of measure dynamical systems in the framework of aperiodic order and diffraction can be found in \cite{BL,BL2,Len} to which we refer for further details.

\bigskip

Let $G$ be a locally compact $\sigma$-compact  abelian group.
A measure $\nu\in \MM $ is {\em translation bounded\/} if for some (and then all)  open non empty relatively compact  set $V$ in $G$ there exists a $C = C_V\geq 0$   with
\begin{equation*}\label{mcv} |\nu| (t+V) \leq C
\end{equation*}
for every $t\in G$. Here,  $|\nu|$ is the total variation measure of $\nu$.
 The set of all
translation bounded measures is denoted by $\MTB$.  As a subset of
$\MM$, it carries the vague topology.   There is an obvious action of $G$ on $\MTB$,  given by
\begin{equation*}
\; G\times  \MTB \; \longrightarrow \; \MTB
   \, , \quad (t,\nu) \, \mapsto \, \alpha^{}_t\ts \nu
   \quad \mbox{with} \quad (\alpha^{}_t \ts \nu)(\varphi) \, := \,
   \nu(\delta_{-t} \ast \varphi)
\end{equation*}
for $\varphi \in C_c (G)$.  Here, $\delta_t$ denotes the unit point
mass at $t \in G$ and the convolution $\omega \ast \varphi$ between
$\varphi\in C_c (G)$ and $\omega \in \MTB$ is defined by
\[ \omega \ast \varphi (s) := \int \varphi (s - u) \dd \omega(u).\]

It is not hard to see that this action is continuous when restricted to a
compact subset of $\MTB$ (see e.g. \cite{BL}).

\begin{definition}
  $\Oomega$ is called a dynamical system on the translation bounded
  measures on\/ $G$  {\rm (TMDS)} if
  $X$ is a compact  $\alpha$-invariant subset of\/ $\MTB$.
 \end{definition}

Every translation bounded measure $\nu$ gives rise to a (TMDS)
$(X(\nu),G)$, where
\[ X(\nu):=\overline{\{\alpha_t\nu : t\in G\}}.\]
In fact, if $\nu$ satisfies  the inequality $|\nu| (t + V)\leq C$ for some nonempty open relatively compact $V$ in $G$ and all $t\in G$, this inequality  will be true  for all $\mu \in X(\nu)$ and this shows the desired compactness by Theorem 2 of \cite{BL}.

\smallskip

As usual $\varphi \in C_b (G)$ is called {\em almost periodic} (in the
sense of Bohr) if, for every $\epsilon >0$, the set of $t\in G$ with
$\|\delta_t \ast \varphi - \varphi \|_\infty \leq \epsilon$ is
relatively dense in $G$. By standard reasoning this is equivalent to
$\{\delta_t \ast \varphi : t\in G\}$ being relatively compact in in
$C_b (G)$ (see e.g. \cite{Zai}).

\begin{definition}
A translation bounded measure $\nu$ is called strongly almost periodic
  if $\nu \ast \varphi$ is almost periodic (in the Bohr sense) for
  every $\varphi \in C_c (G)$.
\end{definition}

To deal with almost periodic functions  it is useful to introduce the strong topology. To do so, we recall that for $\nu\in \MTB$ and
$\varphi \in C_c (G)$, the convolution $\nu \ast \varphi$ belongs to
$C_b (G)$.  The space $C_b (G)$ is equipped with the supremum norm.
The strong topology is  the weakest topology on
$\MTB$ such that all maps
$$
\MTB\longrightarrow C_b (G), \;\:\nu\mapsto \nu\ast \varphi,  $$
are continuous.

\begin{theorem}\label{char-ap} Let $G$ be a locally compact $\sigma$-compact abelian group. 
Let $\nu \in \MTB$ be given.  The following assertions are equivalent:
\begin{itemize}
\item[(i)]  The measure $\nu$ is strongly almost periodic (i.e.\ $\nu \ast
  \varphi$ is Bohr almost periodic for every $\varphi \in C_c (G)$).

\item[(ii)] $\{\alpha_t\nu : t\in G\}$ is relatively compact in the strong topology.

\item[(iii)] The topological space $X(\nu)$   is a topological group
  with addition $\dotplus$ satisfying $\alpha_s \nu \dotplus \alpha_t \nu =
  \alpha_{s+t} \nu$ for all $ s,t\in G$.
\item[(iv)] The dynamical system $(X(\nu),G)$ is a rotation on a compact abelian group.
\item[(v)] The hull $(X (\nu), G)$ is an equicontinuous dynamical system.
\end{itemize}

\end{theorem}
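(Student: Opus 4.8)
The plan is to prove the five assertions equivalent by splitting them into a \emph{measure-theoretic} pair $\{$(i),(ii)$\}$, phrased through the strong topology, and a \emph{dynamical} triple $\{$(iii),(iv),(v)$\}$, and then bridging the two blocks. The bridge rests on two elementary facts: the comparison of the vague topology (in which $X(\nu)$ is defined and compact) with the strong topology, and the observation that each translation $\alpha_t$ is an isometry for every strong seminorm $p_\varphi(\mu):=\|\mu\ast\varphi\|_\infty$, since translation is a sup-norm isometry of $C_b(G)$ and $(\alpha_t\nu)\ast\varphi=\delta_t\ast(\nu\ast\varphi)$.

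First I would treat (i)$\Leftrightarrow$(ii). The identity $(\alpha_t\nu)\ast\varphi=\delta_t\ast(\nu\ast\varphi)$ encodes the strong orbit of $\nu$, coordinate by coordinate, through the translation orbits $\{\delta_t\ast(\nu\ast\varphi):t\in G\}$ in $C_b(G)$. By the Bohr characterization recalled before the theorem, $\nu\ast\varphi$ is almost periodic precisely when this orbit is relatively compact in $C_b(G)$. Embedding $\mu\mapsto(\mu\ast\varphi)_{\varphi\in C_c(G)}$ into $\prod_{\varphi}C_b(G)$ realizes the strong topology as a subspace topology, so Tychonoff's theorem converts coordinatewise relative compactness (condition (i)) into joint relative compactness of the strong orbit (condition (ii)); the reverse implication is immediate by projecting onto a single coordinate.

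Next I would prove the bridge lemma: under (ii) the strong and vague topologies coincide on $X(\nu)$. Strong convergence forces vague convergence (evaluate $\mu\ast\varphi$ at a point, where $\varphi$ ranges over all of $C_c(G)$), so the strong topology is finer, whence the strong closure $K$ of the orbit satisfies $K\subseteq X(\nu)$; conversely $K$ is strongly compact, hence vaguely compact, hence vaguely closed and containing the orbit, so $K\supseteq X(\nu)$. Thus $K=X(\nu)$, and the continuous bijection from compact $(X(\nu),\mathrm{strong})$ onto Hausdorff $(X(\nu),\mathrm{vague})$ is a homeomorphism. Equicontinuity (v) is then almost free: $p_\varphi(\alpha_t(\mu-\mu'))=p_\varphi(\mu-\mu')$ for every $t$ and $\varphi$, so $\{\alpha_t\}$ is equicontinuous for the strong uniformity, hence for the coinciding vague one, giving (ii)$\Rightarrow$(v). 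For (v)$\Rightarrow$(iv) I would invoke Ellis' Theorem~\ref{cor-equi}, noting that $X(\nu)$ is transitive so that equicontinuity yields minimality; (iv)$\Rightarrow$(v) is the remark that a rotation is equicontinuous. For (iv)$\Leftrightarrow$(iii) I would transport the group law: a rotation structure $t\cdot x=\iota(t)+x$ produces, with $\nu$ as neutral element, the addition of (iii) via $\alpha_s\nu\dotplus\alpha_t\nu:=\alpha_{s+t}\nu$, and conversely such a $\dotplus$ makes $t\mapsto\alpha_t\nu$ the required homomorphism. To close the cycle I would establish (v)$\Rightarrow$(i): equicontinuity with minimality makes the system uniformly almost periodic (Theorem~\ref{thm-Auslander-equi}), so for each $\varphi$ and $\epsilon$ the return vectors $r$ with $\alpha_r\nu$ near $\nu$ are relatively dense, and using $(\alpha_t\nu)\ast\varphi=\delta_t\ast(\nu\ast\varphi)$ together with equicontinuity one checks such $r$ are $\epsilon$-almost periods of $\nu\ast\varphi$.

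The main obstacle is the bridge between the two topologies: everything hinges on showing that strong almost periodicity forces strong and vague to agree on the hull, and on setting up the Tychonoff embedding correctly over the uncountable index set $C_c(G)$. Once the isometry property of $\alpha_t$ is available, the passage to equicontinuity is routine; the genuinely delicate points are purely topological, namely the compact-to-Hausdorff homeomorphism step and the relative density of the joint return vectors, where one should be careful if $X(\nu)$ is not metrizable and phrase Auslander's criterion in the uniform-structure language of Section~\ref{Maximal}.
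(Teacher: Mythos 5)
Your proposal is correct, but it takes a genuinely different route from the paper: the paper's proof of Theorem \ref{char-ap} is essentially three citations --- the equivalence of (i), (ii), (iii) is quoted wholesale from Lemma 4.2 of \cite{LR}, (iii)$\Leftrightarrow$(iv) is declared immediate from the definition of a rotation on a compact abelian group, and (iv)$\Leftrightarrow$(v) is Ellis' theorem (Theorem \ref{cor-equi}). You instead prove the measure-theoretic content from scratch, and your logical graph also differs: rather than the paper's chain through (iii), you close the cycle (i)$\Rightarrow$(ii)$\Rightarrow$(v)$\Rightarrow$(i), with (v)$\Leftrightarrow$(iv)$\Leftrightarrow$(iii) attached, so that (v)$\Rightarrow$(i) is carried by Auslander's uniform almost periodicity (Theorem \ref{thm-Auslander-equi}) rather than by the imported lemma. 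The mechanisms you isolate --- the identity $(\alpha_t\nu)\ast\varphi=\delta_t\ast(\nu\ast\varphi)$, the fact that each $\alpha_t$ is an isometry for every seminorm $p_\varphi(\mu)=\|\mu\ast\varphi\|_\infty$, and the coincidence of the strong and vague topologies on the hull under (ii) --- are precisely what underlies the cited Lemma 4.2 of \cite{LR}, so your argument makes the theorem self-contained at the cost of length; the paper's version buys brevity at the cost of opacity. Your attention to the non-metrizable case (phrasing both equicontinuity and the return-vector argument in entourages, and using the uniqueness of the uniformity on the compact hull) is exactly right and is needed, since $G$ is only assumed $\sigma$-compact.

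One step should be made explicit in your (i)$\Rightarrow$(ii): Tychonoff gives relative compactness of the orbit's image in $\prod_{\varphi\in C_c(G)} C_b(G)$, but assertion (ii) is relative compactness in $\MTB$ with the strong topology, so you must check that every product-limit point of the orbit is again (the image of) a translation bounded measure. This is a short argument using the vague compactness of $X(\nu)$, which the paper records via Theorem 2 of \cite{BL}: given a net $\alpha_{t_i}\nu$ converging in the product, pass to a subnet converging vaguely to some $\mu\in X(\nu)$; vague convergence yields pointwise convergence of $\alpha_{t_i}\nu\ast\varphi$ to $\mu\ast\varphi$, while the product coordinate converges uniformly, so the two limits agree and the subnet converges to $\mu$ strongly. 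With that inserted, your bridge lemma, the isometry-based proof of equicontinuity, and the closing step (v)$\Rightarrow$(i) are all sound.
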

\begin{proof} The equivalence of (i), (ii) and (iii) is shown in Lemma
  4.2 of \cite{LR}. The equivalence of (iii) and (iv) is immediate
  from the definition of rotation on a compact abelian group. The
  equivalence of (iv)  and (v) is Theorem \ref{cor-equi}.
\end{proof}

Factors of equicontinuous dynamical systems are equicontinuous as well. In our context the following variant will be of  use in Section \ref{How-Failure}.

\begin{cor}\label{cor-ap}
 Let $\nu$ be a translation bounded measure on
  $G$. Then, the following assertions are equivalent:
\begin{itemize}
\item[(i)] The measure $\nu$ is strongly almost periodic.

\item[(ii)] There exists a rotation of a compact abelian group  $(\T,G)$ together with a continuous map $\pi : \T\longrightarrow \MTB$ with  $\pi (e ) = \nu$ and  $\pi (t \cdot \xi ) = t \cdot\pi (\xi)$ for all $\xi\in \T$ and $t\in G$.
\end{itemize}
In this case $\pi (\T) = X (\nu)$ and $\pi$ is a factor map as well as a group homomorphism.
\end{cor}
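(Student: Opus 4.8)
The plan is to reduce the whole statement to Theorem \ref{char-ap}, which already equates strong almost periodicity of $\nu$ with equicontinuity of the hull $(X(\nu),G)$, and to feed in two soft permanence properties of equicontinuity recorded above: every rotation on a compact abelian group is equicontinuous, and a factor of an equicontinuous system is again equicontinuous. With these, one direction becomes a matter of exhibiting the correct $\pi$, and the other of reading off equicontinuity from the given $\pi$.

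For (i) $\Rightarrow$ (ii) I would invoke Theorem \ref{char-ap}: since $\nu$ is strongly almost periodic, $(X(\nu),G)$ is itself a rotation on a compact abelian group, whose neutral element is $\nu$ (this is precisely the group law $\dotplus$ of part (iii), for which $\alpha_e\nu=\nu$ is neutral). I then simply take $\T := X(\nu)$ and $\pi := \mathrm{id}_{X(\nu)}$. Equivariance and $\pi(e)=\nu$ are immediate, and because $X(\nu)$ is the orbit closure of $\nu$ it is transitive and equicontinuous, hence minimal; so $\T$ is a minimal rotation with $\pi(\T)=X(\nu)$, and $\pi$ is trivially a factor map and a group homomorphism.

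For (ii) $\Rightarrow$ (i), start from the given rotation $t\cdot\xi=\iota(t)+\xi$ and the continuous equivariant $\pi$ with $\pi(e)=\nu$. I would first compute the orbit of $\nu$: equivariance gives $\alpha_t\nu=\pi(t\cdot e)=\pi(\iota(t))$, so the orbit is $\pi(\iota(G))$, and continuity of $\pi$ together with compactness of $\T$ yields
$$X(\nu)=\overline{\pi(\iota(G))}=\pi\bigl(\overline{\iota(G)}\bigr).$$
Writing $\T_0:=\overline{\iota(G)}$, a closed (hence compact) subgroup of $\T$ on which $G$ acts as a \emph{minimal} rotation, the restriction $\pi|_{\T_0}\colon \T_0\to X(\nu)$ is a continuous, surjective, equivariant map, i.e.\ a factor map. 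Since $(\T_0,G)$ is a rotation it is equicontinuous, so its factor $(X(\nu),G)$ is equicontinuous, and Theorem \ref{char-ap} (equivalence of (v) and (i)) gives that $\nu$ is strongly almost periodic.

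For the supplementary assertions I would check that $\pi|_{\T_0}$ is also a group homomorphism: on the dense subgroup $\iota(G)\subset\T_0$ one has $\pi(\iota(s)+\iota(t))=\pi(\iota(s+t))=\alpha_{s+t}\nu=\alpha_s\nu\dotplus\alpha_t\nu$ by the group law $\dotplus$ of Theorem \ref{char-ap}(iii), and this extends to all of $\T_0$ by continuity of $\pi$ and of both group operations. The one point that genuinely needs care — and which I expect to be the main obstacle — is the claimed equality $\pi(\T)=X(\nu)$: the computation above only yields $X(\nu)=\pi(\overline{\iota(G)})$, so $\pi(\T)=X(\nu)$ holds exactly when $\iota(G)$ is dense in $\T$, i.e.\ when the rotation $(\T,G)$ is minimal (as it is in the intended applications, where $\T$ is a Kronecker flow). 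In the non-minimal case one must either replace $\T$ by $\T_0=\overline{\iota(G)}$ or read the statement as referring to the minimal rotation; with that understood, $\pi$ is a factor map and group homomorphism onto $X(\nu)$ as asserted.
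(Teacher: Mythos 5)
Your proof is correct, but it takes a genuinely different route from the paper's. The paper disposes of the corollary in three lines: it observes that in the situation of (ii) the map $\pi$ is automatically a factor map with $\pi(\T)=X(\nu)$ ``as $\T$ is compact and $\pi$ is continuous'', and then simply cites Theorem 5.1 of \cite{LR} for the equivalence of (i) and (ii) and Lemma 5.2 of \cite{LR} for the homomorphism property. You instead stay inside the paper: for (i)$\Rightarrow$(ii) you take $\T=X(\nu)$ with $\pi$ the identity, using parts (iii)/(iv) of Theorem \ref{char-ap} (your observation that $\nu$ is the neutral element, being the unique idempotent, is the right way to see $\pi(e)=\nu$); for (ii)$\Rightarrow$(i) you pass to $\T_0=\overline{\iota(G)}$, get $X(\nu)=\overline{\pi(\iota(G))}=\pi(\T_0)$ from compactness, and conclude via the two permanence facts stated in Section \ref{Maximal} (rotations are equicontinuous, factors of equicontinuous systems are equicontinuous) together with (v)$\Rightarrow$(i) of Theorem \ref{char-ap}; the homomorphism property you prove by density of $\iota(G)$ in $\T_0$ and continuity of both group operations. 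This buys a self-contained argument at the cost of routing through Ellis's theorem hidden in Theorem \ref{char-ap}, where the paper leans on the external construction of \cite{LR}. Moreover, your caveat about minimality is not pedantry but an actual sharpening of the paper's one-line justification: compactness and continuity alone give only $X(\nu)=\pi\bigl(\overline{\iota(G)}\bigr)\subseteq\pi(\T)$, and if $\iota(G)$ is not dense the asserted equality can fail --- take $G=\R$, $\T=S^1$ with the trivial homomorphism $\iota\equiv e$, $\nu$ Lebesgue measure and $\pi(z)=c(z)\nu$ for a nonconstant continuous $c$ with $c(e)=1$; then (ii) holds (every $c(z)\nu$ is $G$-invariant) but $\pi(\T)\supsetneq X(\nu)=\{\nu\}$. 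Since the paper's definition of a rotation on a compact abelian group does not require $\iota(G)$ to be dense, the supplementary assertion $\pi(\T)=X(\nu)$ should indeed be read for the minimal rotation (as it is in the application to the Kronecker flow in Section \ref{How-Failure}), and your repair --- replacing $\T$ by $\T_0$ --- is exactly the right fix.
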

\begin{proof}  In the situation of (ii) we must have that $\pi$ is a factor map with  $\pi (\T) = X(\nu)$ as $\T$ is compact and $\pi$ is continuous. Now
the equivalence of (i) and (ii) follows from Theorem 5.1 of \cite{LR}. The fact, that $\pi$ is a group homomorphism now  follows from Lemma 5.2 of \cite{LR}.
\end{proof}

\textbf{Remark.} The almost periodic measure dynamical systems
considered above are of interest in the study of diffraction. In fact,
they exhibit  pure point diffraction and  pure point dynamical
spectrum with continuous eigenfunctions. Details can be found in
\cite{LR} (see \cite{BLM,LS,Len} for related arguments on diffraction
as well). For a further study of a particular subclass of systems and a characterization in the context of cut and project schemes we refer the reader to \cite{Stru3}.

\section{Application to Delone dynamical systems}\label{Application}
The previous sections dealt with first  general dynamical systems and then  measure dynamical systems. In this section we specialize even further and consider  Delone dynamical systems.   These systems are made of suitable uniformly discrete subsets of the underlying group. Such subsets can be considered as translation bounded measures in a canonical way.  We will introduce the necessary notation to deal with these systems and to state Lagarias question. We will then go on and
 characterize what it means for such a system to be equicontinuous in Theorem \ref{thm-Delone-ec}. This provides a main step in our dealing with the Lagarias question. Our strategy to view Delone sets as measures has the advantage that it allows us to treat colored Delone sets with no extra effort. This is discussed at the end of the section.

\bigskip

Let $G$ be a locally compact abelian group.  We will deal with subsets $\L$ of $G$.
A subset $\L$ of $G$ is  called \textit{uniformly discrete} if there exists an open neighborhood $U$ of the identity  in $G$ such that
$$(x + U) \cap (y + U) = \emptyset$$
for all $x,y\in L$ with $x\neq y$. A subset $\L$ of $G$ is called \textit{relatively dense} if there exists a compact neighborhood $K$ of the identity of $G$ such that
$$ G = \bigcup_{x\in \L} (x + K).$$
A subset $\L$ of $G$ is called a \textit{Delone set} if it is both uniformly discrete and relatively dense.
Any uniformly discrete set $\L$ (and hence any Delone set) in $G$ can naturally be identified with the translation bounded measure
$$\delta_L := \sum_{x\in \L} \delta_x,$$
where $\delta_x$ denotes the unit point may at $x\in G$.  The \textit{hull} of $X(\delta_\L)$ in the vague topology then consists of measures of the form  $\delta_{M}$ with $M\subset G$ uniformly discrete. In the sequel we will identify such measures with the underlying sets and then also  write $X (\L)$ instead of $X(\delta_\L)$. We call $(X(\L), G)$   a \textit{Delone dynamical} system if $\L$ is a Delone set. As discussed e.g. in \cite{BL} the topology on $X$ inherited in this way  from the vague topology  on the measures  can also be obtained from a uniform structure on the set  $\mathcal{P}$ of all uniformly discrete point sets  on $G$.
Namely, for $K\subset G$ compact and $V\!$ a
neighbourhood of $e$ in $G$, we set
\begin{equation*}
   U_{K,V} \; := \; \{(\L_1,\L_2) \in \mathcal{P} \times \mathcal{P} :
   \L_1 \cap K \subset \L_2 + V \mbox{ and } \L_2 \cap K \subset \L_1 + V\}.
\end{equation*}
Then, the set $\UN$ consisting of all $U_{K,V}$ with $K\subset G$ compact and $V$ a neighborhood of $e\in G$, will provide a uniform structure. The induced topology is called the \textit{local rubber topology} in \cite{BL} and shown to agree   with the topology induced  by the vague topology on measures.

Besides the hull of $\L$ we will also need the \textit{canonical transversal}  given by
$$\Xi(\L) = \{\L'\in X: e\in\L\}.$$
This set can easily be seen to be the  closure of the set $\{\L-x:x\in\L\}$. For this reason it is also sometimes denoted as  the  \textit{discrete hull} of $\L$. Of course, the discrete hull of $\L$ is a subset of the hull $X$ and as such inherits a topology.

Whenever $\L$ is a subset of $G$ a set of the form $(-x + \L)\cap K$ with $x\in L$ and $K$ compact is  called a \textit{patch of $\L$}.  A set  $\L$  in $G$ is said to have \textit{finite local complexity} (FLC) if for any compact $K$ in $G$   the set
$$\{ (- x + \L) \cap K : x\in \L\}$$
is finite.  This just means that there are only finitely many patches for fixed 'size' $K$.   The relevance of sets with finite local complexity in our setting comes from the following feature of the topology.

\begin{lemma}\label{lem-top-flc} Let $G$ be a locally compact abelian group. Let $\L$ be a Delone set with finite local complexity in $G$, $X = X (\L)$ its  hull and $\Xi(\L)$ its discrete hull.
The unique uniform structure on the discrete hull compatible with the relative topology has a fundamental system given by the subsets
$$U_C := \{(\L'',\L')\in \Xi\times \Xi:  \L'' \cap C = \L^{'} \cap C\}$$
for $C\subset G$ compact.
In particular, if  $(\L_\iota)$ is  a net in $\Xi$ and $\L^{'}$  belongs to $X$, then
 the following assertions are equivalent:

\begin{itemize}
\item[(i)] $\L_\iota \to \L^{'}$.
\item[(ii)] For any compact $C$ in $G$ there exists an $\iota_C$ with $\L_\iota \cap C = \L^{'} \cap C$ for  all $\iota > \iota_C$.
\end{itemize}
\end{lemma}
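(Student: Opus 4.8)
The plan is to establish the claim in two stages: first identify the uniform structure on the discrete hull $\Xi(\L)$ with the one generated by the sets $U_C$, and then deduce the net characterization as an immediate consequence. The key observation driving everything is that finite local complexity rigidifies the local rubber topology: when a point set has only finitely many patches of each fixed size, the "fuzziness" in the sets $U_{K,V}$ (allowing points of $\L_1$ to be approximated by nearby but distinct points of $\L_2 + V$) collapses into exact agreement on compact regions.

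First I would verify that each $U_C$ is a member of the ambient uniform structure $\UN$ restricted to $\Xi\times\Xi$, and conversely that each restricted $U_{K,V}$ contains some $U_C$. For the former direction, I would show that given $C$ compact, one can choose a sufficiently large compact $K\supset C$ and a small enough neighborhood $V$ of $e$ (smaller than the uniform-discreteness radius) so that $U_{K,V}\cap(\Xi\times\Xi)\subset U_C$. The FLC hypothesis is exactly what makes this work: because there are only finitely many patches $(-x+\L)\cap K'$ for any fixed size, there is a uniform minimum separation $r>0$ between any two distinct points appearing in any element of the hull, so if $V$ has radius less than $r/2$ and $\L_1\cap K\subset\L_2+V$ with both sets uniformly discrete, the inclusion forces $\L_1\cap C=\L_2\cap C$ rather than mere approximation. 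Conversely, each $U_C$ clearly contains a $U_{K,V}$ by taking $K=C$ and $V$ small, since exact agreement on $C$ trivially implies the rubber condition. This mutual cofinality shows the two families generate the same uniformity, and uniqueness of the compatible uniform structure on the compact (hence uniformizable) space $\Xi$ gives that it \emph{is} this structure.

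Once the fundamental system $\{U_C\}$ is established, the equivalence of (i) and (ii) is a direct translation. A net $\L_\iota\to\L'$ means that for every entourage there is an index past which all $(\L_\iota,\L')$ lie in it; using the base $\{U_C\}$, this says precisely that for each compact $C$ there is $\iota_C$ with $(\L_\iota,\L')\in U_C$ for $\iota>\iota_C$, which by definition of $U_C$ is the statement $\L_\iota\cap C=\L'\cap C$. I would note that one should check $\L'\cap C$ is itself well-defined, i.e. that $\L'$ together with the $\L_\iota$ all share the uniform discreteness radius, which holds because the hull $X$ inherits a common uniform-discreteness constant as discussed after the definition of $X(\L)$.

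The main obstacle I anticipate is the first containment $U_{K,V}\cap(\Xi\times\Xi)\subset U_C$, and specifically extracting the uniform separation constant $r$ from finite local complexity. The delicate point is that this separation must be uniform over the \emph{entire} hull, not just over $\L$ itself; I would obtain it by combining FLC (finitely many patches of each size) with the fact that every element of $X$ is a limit of translates of $\L$ and therefore exhibits only patches already occurring in $\L$, so the minimal interpoint distance is bounded below by the uniform-discreteness radius of $\L$. With $r$ in hand, the argument that $\L_1\cap K\subset\L_2+V$ upgrades to exact equality on the smaller set $C$ is a routine pigeonhole once $V$ is taken inside the ball of radius $r/2$.
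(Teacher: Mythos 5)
Your overall architecture---showing that the families $\{U_C\}$ and $\{U_{K,V}\cap(\Xi\times\Xi)\}$ are mutually cofinal and then reading off the net characterization---is exactly the paper's, but there is a genuine gap in the hard containment $U_{K,V}\cap(\Xi\times\Xi)\subset U_C$: you extract the wrong constant from FLC. You claim that if $V$ has radius less than half the uniform-discreteness (packing) radius $r$ common to the hull, the rubber inclusions already force $\L_1\cap C=\L_2\cap C$ by ``routine pigeonhole.'' The packing radius, however, only separates points \emph{within a single set}; it says nothing about how close two \emph{distinct anchored patches} of $\L$ can be to one another. Concretely, take the periodic (hence FLC) Delone set $\L = 100\Z\cup(100\Z+1)\cup(100\Z+50)\cup(100\Z+51.1)$ in $\R$, whose packing radius is $r=1/2$. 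Both $\L$ and $\L-50$ lie in $\Xi(\L)$, and with $K=C=[-10,10]$ one has $\L\cap C=\{0,1\}$ while $(\L-50)\cap C=\{0,1.1\}$. For $V=(-0.2,0.2)$, of radius $0.2<r/2$, the inclusions $\L\cap K\subset(\L-50)+V$ and $(\L-50)\cap K\subset\L+V$ both hold, yet the two patches are distinct. What a $V$ of radius less than $r$ actually buys is that each point of $\L_1\cap C$ has a \emph{unique} $V$-close partner in $\L_2$---a near-isometric matching, not equality---and that is precisely where your pigeonhole fails.

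The paper's proof uses FLC differently, and correctly: fix a compact $C$ containing $e$; by FLC (together with the standard fact you also invoke, that elements of the hull exhibit only patches of $\L$) there are only \emph{finitely many} possibilities for $\L'\cap C$ with $\L'\in\Xi$. For each pair of distinct patches $P\neq Q$ in this finite list there is a point of one lying outside the other finite set, hence outside its $V$-thickening for all sufficiently small $V$; intersecting over the finitely many pairs yields a $V$ with $U_{K,V}\cap(\Xi\times\Xi)\subset U_C$ (taking $K=C$). Note that this $V$ depends on $C$, and under mere FLC it must: as $C$ grows, distinct $C$-patches can differ by perturbations far smaller than any threshold derived from the packing radius, so no $C$-independent bound of the kind you propose can work. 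The remaining ingredients of your write-up---the trivial direction that exact agreement on $C\supset K$ implies the rubber condition (which you state with the containment reversed, but justify correctly), uniqueness of the uniformity on the compact space $\Xi$, and the translation of the base $\{U_C\}$ into the net statement---match the paper and go through once this key step is repaired.
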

\begin{proof} This is well known and appears - at least implicitly - in e.g.\  \cite{Schl,BM,BLM}.
We sketch a proof for completeness:  Let a compact $C$ in $G$ be given.
By definition of $\Xi$, we can assume without loss of generality that $C$ contains the point $e$. Then,  by finite local complexity,  there are only finitely many possibilities for $\L^{'} \cap C$ (for $\L^{'} \in \Xi$). This implies that for any  sufficiently small neighbourhood $V$ of $e\in G$, the inclusions
$$\L_1 \cap C \subset \L_2 + V \mbox{ and } \L_2 \cap C \subset \L_1 + V$$
can only hold if
$$\L_1 \cap C = \L_2 \cap C.$$
This shows that the $U_C$, $C\subset G$ compact,  form indeed a basis of the uniformity.
Now,  the last statement characterizing  convergence is immediate.
\end{proof}

An \textit{occurrences of the patch } $(-x + \L) \cap K$ in a Delone set  $\L$ is an element of
$$ \{y \in \L : (- x + \L)\cap K \subset (-y + \L)  \}.$$

A Delone set $\L$ is called  \textit{repetitive}  if for any patch  the set of occurrences is relatively dense. For FLC Delone sets this condition is equivalent to minimality of the associated system $(X(\L), G)$, but for non-FLC this is no longer the case.

For any uniformly discrete subset $\L$ of $G$ and any $\varphi \in C_c (G)$ we can define the function $$f_{\L,\varphi} : G\longrightarrow \C, \:\; f_{\L, \varphi} (t) = \sum_{x\in \L} \varphi (t - x).$$

\medskip

From these definitions and Theorem \ref{char-ap}, we immediately obtain the following theorem.

\begin{theorem} \label{thm-Delone-ec-general}  Let $\L$ be a Delone set in the locally compact $\sigma$-compact  abelian group $G$. Then, the following assertions are equivalent:

\begin{itemize}
\item[(i)] The function $ f_{\L,\varphi}$ is Bohr-almost periodic for any $\varphi \in C_c (G)$.
\item[(ii)] The measure $\delta_\L$ is strongly almost periodic.
\item[(iíi)] The hull $(X (\L), G)$ is  a compact abelian group   with  neutral element $\L$ and group addition  satisfying $(t + \L) + (s + \L) = (s +t + \L)$ for all   $t,s\in G$.
\item[(iv)]  The dynamical system $(X(\L),G)$ is equicontinuous.
\end{itemize}
\end{theorem}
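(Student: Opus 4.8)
The plan is to deduce all four equivalences from Theorem \ref{char-ap} applied to the translation bounded measure $\nu = \delta_\L$, after translating its measure-theoretic assertions into the language of the set $\L$. Three elementary identifications accomplish this translation. First, for $\varphi \in C_c(G)$ the definition of convolution gives
\[ (\delta_\L \ast \varphi)(t) = \int \varphi(t-u)\dd\delta_\L(u) = \sum_{x\in\L}\varphi(t-x) = f_{\L,\varphi}(t), \]
so that $f_{\L,\varphi} = \delta_\L \ast \varphi$; hence condition (i), that every $f_{\L,\varphi}$ be Bohr almost periodic, is verbatim the definition of $\delta_\L$ being strongly almost periodic, and (i) $\Leftrightarrow$ (ii) holds by definition. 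Second, a short computation from the action $(\alpha_t \nu)(\varphi) = \nu(\delta_{-t}\ast\varphi)$ shows $\alpha_t \delta_\L = \delta_{t+\L}$, so the $G$-action on $\delta_\L$ is intertwined with translation of the underlying set. Third, as already recorded, $X(\L) = X(\delta_\L)$ and every element of this hull has the form $\delta_M$ for a uniformly discrete $M \subset G$, so assertions about the hull of the measure transfer to assertions about the hull of the set.

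With these identifications the remaining equivalences are immediate. The measure $\delta_\L$ is translation bounded, since a uniformly discrete set meets any fixed compact set in a uniformly bounded number of points, so Theorem \ref{char-ap} applies with $\nu = \delta_\L$. Its assertion (i) is our (ii), and its assertion (v) is our (iv) because $X(\L) = X(\delta_\L)$, which yields (ii) $\Leftrightarrow$ (iv). For (iii) I would invoke assertions (iii) and (iv) of Theorem \ref{char-ap}: they furnish on $X(\delta_\L)$ a topological group structure $\dotplus$, abelian by the rotation description in assertion (iv), satisfying $\alpha_s\delta_\L \dotplus \alpha_t\delta_\L = \alpha_{s+t}\delta_\L$. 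Rewriting this via $\alpha_t\delta_\L = \delta_{t+\L}$ and identifying measures with sets produces the group law $(t+\L)+(s+\L) = (s+t+\L)$, and putting $s=e$ shows $\L$ is the neutral element. Since compactness of the hull holds for every translation bounded measure, this is exactly condition (iii), and the chain (i) $\Leftrightarrow$ (ii) $\Leftrightarrow$ (iv), together with the reformulation giving (iii), completes the argument.

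As the statement is advertised to follow immediately, there is no genuine obstacle, only two points deserving care. The first is bookkeeping with the translation convention: one must verify that $\alpha_t$ sends $\delta_\L$ to $\delta_{t+\L}$ rather than $\delta_{-t+\L}$, since this fixes the sign in the group law of (iii). The second is checking that the group structure supplied by Theorem \ref{char-ap} on the space of measures genuinely descends to one on uniformly discrete sets; this rests on the fact, noted above for the hull, that $X(\delta_\L)$ consists only of Dirac combs $\delta_M$, so that $M \mapsto \delta_M$ is a topological bijection between the relevant sets and measures. Once these routine points are settled the proof is complete.
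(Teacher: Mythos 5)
Your proposal is correct and takes essentially the same route as the paper, which derives the theorem immediately from Theorem~\ref{char-ap} together with the very identifications you make explicit: $f_{\L,\varphi} = \delta_\L \ast \varphi$, $\alpha_t\ts\delta_\L = \delta_{t+\L}$, and $X(\L) = X(\delta_\L)$ with the hull consisting of Dirac combs of uniformly discrete sets. Your careful verification of the translation sign convention and of the descent of the group structure from measures to sets simply spells out the routine details that the paper's one-line proof (``From these definitions and Theorem~\ref{char-ap}, we immediately obtain the following theorem'') leaves implicit.
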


We finish this section by discussing how the above
 above considerations can be  carried over to colored subsets of groups:

  Fix a finite set $\mathcal{C}$.
  We think of elements of $\mathcal{C}$ as colors.
  A \textit{colored set} (with colors from   $\mathcal{C}$)  on the locally compact abelian group $G$ is a pair $(\L, c)$ consisting of a subset $\L$ of $G$  and a map $c : \L \longrightarrow \mathcal{C}$. The set $\L$ is called the \textit{support} of the colored set. A colored set is called \textit{uniformly discrete, relatively dense, and  Delone} respectively  if and only if its support has the corresponding property.    A \textit{colored patch}  (with colors from $\mathcal{C}$)  in a colored  set $(\L,c)$ is a pair $(P,c_P)$ consisting of a patch $P$ in $\L$ and the restriction $c_P$  of $c$ to $P$. The notion of finite local complexity then carries directly over to colored  sets. As there are only finitely many colors a  colored set  $(\L,c)$ has finite local complexity if and only if  $\L$ has finite local complexity.  A colored set whose support is uniformly discrete    can be identified with a translation bounded measure on the following way: Chose an injective function $ f: \mathcal{C} \longrightarrow \C$. Then,  we can associate to $(\L,c)$ the measure
  $$\delta_{(\L,c)} := \sum_{x\in \L} f (c(x)) \delta_x.$$
  This allows one to transfer results from measures to  colored Delone sets in the same way as they were transferred from measures to Delone sets. In particular, the direct analogue of  Theorem \ref{thm-Delone-ec-general} for colored Delone sets can be shown in this way. We refrain from an explicit statement.

\section{An affirmative answer to the question of Lagarias  in the case of finite local complexity }\label{Proof}
For tilings of Euclidean space Barge and Olimb have given a beautiful
argument showing that in any FLC aperiodic repetitive tiling system
there exist
two distinct tilings which are proximal and hence
their dynamical systems are not equicontinuous.
We will present (a variant of) their reasoning for Delone sets on rather general locally compact abelian groups.

In short, the arguement of Barge and Olimb
contains two steps: First they show that  aperiodicity,
and repetitivity imply that no finite patch of a tiling can
uniquely determine this tiling. We reproduce this step  literally
in Lemma~\ref{lem-eins}. From this it is concluded that there exist
distinct tilings which agree on arbitrarily large patches
and hence are proximal. We will follow a somewhat different
argumentation which applies to Delone sets on compactly generated
locally compact abelian groups (Lemma~\ref{lem-zwei} and Corollary
\ref{cor-main}). This will allow us to give a
characterization of equicontinuous  Delone dynamical systems which
have finite local complexity (Theorem \ref{thm-Delone-FLC}) and yield
an affirmative answer to the  a question of Lagarias  for Delone
dynamical systems with finite local complexity (Corollary
\ref{cor-Lagarias-true}).

\bigskip

\begin{definition} Let $G$ be locally compact abelian group and $\L$ a Delone set in $G$. The elements of
 $P:=\{t\in G: t + \L = \L\}$ are called periods of $\L$.  The Delone set $\L$ is called completely periodic or crystalline if the set of its periods is  relatively dense in $G$. A Delone set is called aperiodic if it is not completely periodic.
\end{definition}

We say that a patch of a Delone set  \textit{forces the whole Delone set}  if any
Delone set  in its   hull   which contains the patch at the same
place is equal to the original Delone set.

\begin{lemma} \label{lem-eins}  Let $\L$ be a repetitive Delone set in a locally compact abelian group $G$, which is not completely periodic. Then, it is not forced by any of its patches.
\end{lemma}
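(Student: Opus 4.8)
The plan is to argue by contradiction: assume that some patch of $\L$ forces the whole set, and deduce that $\L$ must be completely periodic, contradicting the hypothesis of aperiodicity. The one idea driving the proof is that, once a patch forces the Delone set, \emph{every occurrence of that patch is forced to be a period}; repetitivity then makes the set of periods relatively dense, which is exactly complete periodicity.

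\textbf{Key steps.} Suppose toward a contradiction that the patch $P:=(-x_0+\L)\cap K$, with $x_0\in\L$ and $K\subset G$ compact, forces $\L$. First I would record the elementary but crucial observation that an occurrence of $P$ produces a hull element carrying $P$ at the same place. Indeed, by definition an occurrence is a point $y\in\L$ with $P\subset -y+\L$, and the translate $-y+\L$ lies in the $G$-orbit of $\L$, hence in $X(\L)$. Thus $-y+\L$ is an element of the hull that contains the patch $P$ at the origin, i.e.\ at the location at which $P$ is based. Applying the forcing hypothesis to $\L'=-y+\L$ then gives $-y+\L=\L$; translating by $y$ yields $y+\L=\L$, so $y$ is a period of $\L$. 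As $y$ was an arbitrary occurrence, every occurrence of $P$ lies in the period group $\{t\in G:t+\L=\L\}$. To finish, I would invoke repetitivity: the set of occurrences of $P$ is relatively dense in $G$ by assumption, and since it is contained in the set of periods, the latter is relatively dense as well. This says precisely that $\L$ is completely periodic, contradicting aperiodicity; hence no patch forces $\L$.

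\textbf{Main obstacle.} The mathematical content is short, and the step I would treat most carefully — the only real subtlety — is the bookkeeping of base points in the definition of forcing: one must check that an occurrence $y$, defined through the containment $P\subset -y+\L$, genuinely yields a hull element satisfying the forcing condition ``contains the patch at the same place,'' and that the resulting identity $-y+\L=\L$ transfers correctly to the statement $y+\L=\L$. Everything is invariant under translation, and it is worth noting that finite local complexity is nowhere used; the argument applies to arbitrary repetitive, aperiodic Delone sets, which is exactly what is needed before the FLC hypothesis is imposed later in the section.
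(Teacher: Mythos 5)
Your proof is correct and is essentially identical to the paper's argument: assume a patch forces $\L$, observe that forcing turns every occurrence of that patch into a period, and conclude from repetitivity that the periods are relatively dense, contradicting aperiodicity. Your careful bookkeeping of base points (modulo the fixed translate $x_0$, which only shifts the occurrence set and does not affect relative density) simply makes explicit what the paper's two-line proof leaves implicit.
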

\begin{proof} Assume the contrary. Then, there exists a patch forcing the Delone set $\L$. This means that all occurrences of this patch are periods of the Delone sets. As the Delone set is repetitive, this means that the set of its periods is relatively dense. This is a contradiction (as  $\L$ is not completely periodic).
\end{proof}

So far, we did not need any restrictions on the geometry of the underling group.  For the next lemma we will need a further restriction viz
the group $G$ needs to be compactly generated. This means that there exists a compact neighborhood $W$ of the identity, such that the smallest subgroup of $G$ containing $W$ is in fact $G$. This condition is clearly met for $G =\R^n$. Obviously, any compactly generated group is $\sigma$-compact.

\begin{lemma}\label{lem-zwei}
Let $G$ be a locally compact, compactly generated group. Let $\L$ be a
Delone set  in $G$. Then, there exists a compact set $C$ in $G$ with
the following property: For all $\L_1, \L_2$ in $X(\L)$ with $e \in
\L_1\cap \L_2$ and $\L_1 \neq \L_2$ there exists an $v\in \L_1 \cap
\L_2$ with  $$(-v + \L_1) \cap C \neq (-v + \L_2) \cap C.$$
\end{lemma}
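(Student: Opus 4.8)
The plan is to exhibit a single compact set $C$ and show, by contraposition, that any two distinct elements of the hull sharing the origin must disagree locally around some common point. The mechanism is that a relatively dense subset of a compactly generated group is \emph{coarsely connected} with a step bound that is uniform over the hull; combined with a ``last common point'' argument this yields the claim directly.

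First I would record the two uniform geometric inputs. Since $\L$ is Delone and all members of the hull $X(\L)$ share its uniform discreteness and relative density constants (a standard consequence of the local topology, see \cite{BL,Schl}), there is a single compact neighbourhood $K$ of $e$ with $\L'\cap(g+K)\neq\emptyset$ for every $\L'\in X(\L)$ and every $g\in G$. Second, using compact generation I fix a compact \emph{symmetric} neighbourhood $W$ of $e$ (replacing a generating neighbourhood by $W\cup(-W)$) with $G=\bigcup_{n\ge 1} nW$, where $nW$ is the $n$-fold sumset. I then set $D:=W+K-K$ (the compact set $\{w+k-k'\}$) and claim that $C:=D$ has the required property.

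The heart of the argument is the coarse-connectedness statement: for every $\L'\in X(\L)$ and all $x,y\in\L'$ there is a finite chain $x=x_0,x_1,\dots,x_n=y$ in $\L'$ with $x_i-x_{i-1}\in D$ for all $i$. To prove it, write $y-x=w_1+\cdots+w_n$ with $w_i\in W$, form the intermediate points $p_i:=x+w_1+\cdots+w_i$ (so $p_0=x$, $p_n=y$, and $p_i-p_{i-1}=w_i$), and for each $i$ choose $x_i\in\L'$ with $x_i-p_i\in -K$ by relative density, taking $x_0=x$ and $x_n=y$ (legitimate since $e\in K$). Then $x_i-x_{i-1}=(x_i-p_i)+(p_i-p_{i-1})+(p_{i-1}-x_{i-1})\in -K+W+K=D$. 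This is the step I expect to require the most care: one must keep the bound $D$ uniform (independent of $\L'$ and of the chosen pair) and handle the abelian but possibly non-metric topological-group bookkeeping of sumsets and differences cleanly.

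With $C=D$ fixed, the conclusion follows. Suppose $\L_1\neq\L_2$ lie in $X(\L)$ with $e\in\L_1\cap\L_2$; after possibly swapping them, pick $w\in\L_1\setminus\L_2$. Applying coarse connectedness inside $\L_1$ gives a chain $e=x_0,\dots,x_n=w$ in $\L_1$ with consecutive differences in $D$. Since $x_0=e\in\L_1\cap\L_2$ while $x_n=w\notin\L_2$, let $j$ be the largest index with $x_0,\dots,x_j\in\L_1\cap\L_2$; then $0\le j<n$ and $x_{j+1}\in\L_1\setminus\L_2$. Set $v:=x_j\in\L_1\cap\L_2$. Now $x_{j+1}-v\in D=C$ with $x_{j+1}\in\L_1$ and $x_{j+1}\notin\L_2$, so $x_{j+1}-v$ belongs to $(-v+\L_1)\cap C$ but not to $(-v+\L_2)\cap C$. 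Hence $(-v+\L_1)\cap C\neq(-v+\L_2)\cap C$, which is exactly the assertion.
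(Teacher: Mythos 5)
Your proof is correct, but it takes a genuinely different route from the paper. The paper invokes the structure theorem for compactly generated locally compact abelian groups to write $G=\R^n\times\Z^f\times H$ with $H$ compact, defines the radius $r_M$ at which $\L_1$ and $\L_2$ first disagree (measured in the Euclidean metric on $\R^n\times\Z^f$), and then, by a two-case analysis with some $\epsilon$-bookkeeping, locates a common point $v$ within bounded distance of a disagreement point, arriving at the explicit constant $C=B_{2R+\sqrt f+1}\times H$. You instead prove a chaining statement: every $\L'\in X(\L)$ is $D$-chain-connected for the single compact set $D=W+K-K$, where $W$ is a symmetric compact generating neighbourhood and $K$ a relative-density constant uniform over the hull, and then apply a last-common-point argument along a chain in $\L_1$ from $e$ to a point of $\L_1\setminus\L_2$. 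All steps check out: the uniformity of $K$ over the hull is the same standard fact the paper uses when it asserts $\L'\cap((t+B_R)\times H)\neq\emptyset$ for all $\L'\in X(\L)$; the endpoint conventions in your chain work because $e\in K$; and the maximality of $j$ together with $x_{j+1}-v\in D=C$, $x_{j+1}\in\L_1\setminus\L_2$ gives exactly the required local disagreement (with the swap at the start harmless since the conclusion is symmetric in $\L_1,\L_2$). What your approach buys: it avoids the structure theorem entirely, dispenses with the metric case analysis (no $r_M$, no $\sqrt f$ corrections), yields the transparent constant $C=W+K-K$, and, since it never uses commutativity in an essential way beyond notation, would adapt beyond the abelian setting and to colored Delone sets with no change. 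What the paper's proof buys is a concrete geometric description of $C$ in the coordinates furnished by the structure theorem, which makes the size of $C$ explicit in terms of the covering radius $R$.
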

\begin{proof}  By the main structure theorem on locally compact,
  compactly generated abelian groups, the group $G$ has the form $G =
  \R^n \times \Z^f \times H$ with a compact group $H$ and non-negative
  integer numbers $n$ and $f$.  For $r\geq 0$ let $B_r$ and $U_r$  be
  the closed and open  balls respectively   with radius $r$ around the
  origin in  $\R^n \times \Z^f$ (with respect to the usual Euclidean
  distance).   As $\L$ is a Delone set, there exists an $R>0$ such
  that
$$ \L^{'} \cap  ((t + B_R) \times H)  \neq \emptyset$$
for any $t\in \R^n\times \Z^f$ and $\L^{'} \in X(\L)$.

\medskip

To $\L_1\neq \L_2$ in  $X (\L)$ with $e \in \L_1\cap \L_2$ we can now define
$$r_M :=  \inf \{ r\geq 0 : \L_1 \cap (B_r \times H) \neq  \L_2 \cap (B_r \times H)\}.$$
From  $\L_1 \neq \L_2$ we obtain $r_M< \infty$.

\smallskip

We distinguish two cases:

\smallskip

\textit{Case 1:  $r_M = 0$:} Then, $\L_1 \cap (\{0\}\times H) \neq
\L_2 \cap (\{0\} \times H$). Here, $0$ is the origin of $\R^n \times
\Z^f$. Thus, any compact set $C$ containing $\{0\} \times H$  will do
in this case (with $v =e\in G$).

\smallskip

\textit{Case 2: $r_M >0$:} In particular, $n$ or $f$ are non-zero.
 Without loss of generality we can assume
$r_M > 2R + \epsilon$ for some $\epsilon>0$ which we determine later
(as otherwise, any compact $C$ containing $B_{2R + \epsilon}
 \times H$ will do (with $v =e\in G$)). Now, by definition of $r_M $ we
can find a $t = (s,h) \in (B_{r_M} \setminus U_{r_M})
\times H$ which belongs to only one of $\L_1$ and $\L_2$. Without loss
of generality we assume that $t$ belongs to $\L_1$ but not to $\L_2$.
On the other hand, $(U_{r_M}\times H) \cap \L_1 = (U_{r_M}\times H)\cap
\L_1\cap\L_2$.
So the idea is to find a point $v=(s',h')\in (U_{r_M}\times H) \cap \L_1$
such that the distance between $s$ and $s'$ is bounded by a constant
$c$ which depends only
on $R$, $n$ and $f$ and then choose $C = B_{c}\times H$.

To see that this is possible suppose first that $f=0$. Let $\hat s$ be the unit vector in the direction of $s$.
The $R$ ball around $x = (r_M - R - \epsilon) \hat s$ lies in
$U_{r_M}$. By definition of $R$ it contains a point $s'$ such that $v
= (s',h')\in\L_1$. Clearly $\|s-s'\|\leq 2R +\epsilon$ and so we can
take any $\epsilon>0$.

Now if $f\neq 0$ then $ (r_M - R - \epsilon) \hat s$ might not lie in
$\R^n \times \Z^f$. So we take the nearest point to it in $\R^n \times \Z^f$
such that its $R$-ball is contained in $U_{r_M}$. This requires
$\epsilon$ to be larger than $\sqrt{f}$.
\medskip

The above considerations show that we can choose
$$C = B_{2 R +  \sqrt{f}+1} \times H.$$
This finishes the proof.
\end{proof}

\textbf{Remark.} Note  that the preceding lemma neither assumes that the Delone set in question in repetitive nor that it has finite local complexity.

\bigskip

\begin{cor} \label{cor-main} Let $G$ be a compactly generated group and $\L$ an  repetitive aperiodic Delone set in $G$ with finite local complexity. Then, $(X(\L), G)$ is not equicontinuous.
\end{cor}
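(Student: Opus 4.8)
The plan is to argue by contradiction: assuming $(X(\L),G)$ equicontinuous, I would exhibit two \emph{distinct} points of the hull that are proximal, contradicting the fact recorded just after the definition of proximality that an equicontinuous system has trivial proximality relation. The construction is the variant of the Barge--Olimb argument announced before Lemma~\ref{lem-zwei}: repetitivity together with aperiodicity will prevent any patch from pinning down $\L$, and Lemma~\ref{lem-zwei} will then localise, at bounded distance from a point of $\L$, the place where two nearly coinciding elements of the hull first disagree.

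First I would invoke Lemma~\ref{lem-eins}: as $\L$ is repetitive and aperiodic it is forced by none of its patches. Normalising so that $e\in\L$ and fixing an exhaustion of $\R^n\times\Z^f$ by the closed balls $B_r$ of the structure theorem used in Lemma~\ref{lem-zwei}, this produces for every $r$ a Delone set $\Gamma_r\in X(\L)$ with $\Gamma_r\neq\L$ carrying the same patch of $\L$ in the same location, i.e.\ coinciding with $\L$ on $B_r\times H$; in particular $e\in\L\cap\Gamma_r$. Applying Lemma~\ref{lem-zwei} to the pair $(\L,\Gamma_r)$, with $C$ the fixed compact set it furnishes, I obtain $v_r\in\L\cap\Gamma_r$ with $(-v_r+\L)\cap C\neq(-v_r+\Gamma_r)\cap C$. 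Since $\L$ and $\Gamma_r$ agree on $B_r\times H$, the disagreement window $v_r+C$ cannot lie inside it, so $v_r$ sits near the edge of the agreement region, $|v_r|\to\infty$, and the two sets still coincide on a ball of radius comparable to $|v_r|$ essentially tangent to the origin. Writing $\L^{(r)}:=-v_r+\L$ and $\Gamma^{(r)}:=-v_r+\Gamma_r$, both lie in $X(\L)$, both contain $e$, they differ inside the \emph{fixed} compact $C$, and they agree on this large translated ball.

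I would then pass to the limit using compactness of the transversal $\Xi(\L)$ and finite local complexity. Because only finitely many patches fit into $C$ (Lemma~\ref{lem-top-flc}), a subnet makes $\L^{(r)}\cap C$ and $\Gamma^{(r)}\cap C$ constant and, being distinct for every $r$, the limits $\L^{(r)}\to\L_1$ and $\Gamma^{(r)}\to\L_2$ satisfy $\L_1\cap C\neq\L_2\cap C$, so $\L_1\neq\L_2$. Refining the subnet so that the directions of the $v_r$ converge, the agreement balls exhaust an unbounded half-space--like region $H_0$ (times the compact factor $H$) on which $\L_1$ and $\L_2$ coincide. For any compact $D\subset G$ one can choose $t\in G$ with $-t+D\subset H_0$, so that $(t+\L_1)\cap D=(t+\L_2)\cap D$; letting $D$ exhaust $G$, this says that for every entourage $U\in\UN$ some translate brings the pair into $U$, i.e.\ $\L_1$ and $\L_2$ are proximal. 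As they are distinct, this contradicts equicontinuity, and the corollary follows.

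The crux, and the step I expect to be most delicate, is exactly this passage from ``$\L$ is not forced by a patch'' to an honest proximal pair: one must control \emph{where} two nearly coinciding hull elements first differ, show that this location recedes to infinity while the coincidence region opens up into an unbounded half-space, and carry this out uniformly across the decomposition $G=\R^n\times\Z^f\times H$. This is precisely what Lemma~\ref{lem-zwei} is tailored to supply, by confining the first disagreement to a translate $v_r+C$ of a \emph{single} fixed compact set anchored at a point $v_r\in\L$; finite local complexity is then what guarantees that the two limit points $\L_1$ and $\L_2$ stay apart.
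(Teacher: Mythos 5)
Your outline is sound, but it takes a genuinely different route from the paper, and it contains one concrete gap. The paper never constructs a proximal pair: assuming equicontinuity, it passes to the induced partial action on the transversal $\Xi(\L)$ and reads equicontinuity through the FLC uniformity of Lemma~\ref{lem-top-flc} --- for every compact $C$ there is a compact $C'$ such that $\L_1\cap C'=\L_2\cap C'$ implies $(-v+\L_1)\cap C=(-v+\L_2)\cap C$ for every $v\in\L_1\cap\L_2$ --- then takes $C$ from Lemma~\ref{lem-zwei} to conclude that the patch $\L\cap C'$ forces $\L$, and contradicts Lemma~\ref{lem-eins} directly: no limits, no subnets, no proximality. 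In fact your own sets $\Gamma_r$ already finish the proof this way: as soon as $B_r\times H\supset C'$, the pair $(\L,\Gamma_r)$ agrees on $C'$ yet by Lemma~\ref{lem-zwei} disagrees around some common point $v$, contradicting that consequence of equicontinuity immediately. What you propose instead is the original Barge--Olimb construction (distinct hull elements agreeing on a half space, hence proximal), which the opening of Section~\ref{Proof} explicitly says the paper deviates from; your route is longer but yields the stronger geometric output of an explicit proximal pair, while the paper's buys brevity and avoids any compactness argument.

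The gap is the tangency claim: ``$v_r$ sits near the edge of the agreement region \ldots the two sets still coincide on a ball of radius comparable to $|v_r|$ essentially tangent to the origin'' does not follow from the \emph{statement} of Lemma~\ref{lem-zwei}. Agreement on $B_r\times H$ only forces the disagreement window $v_r+C$ out of that region, i.e.\ a lower bound $|v_r|\gtrsim r$; the statement gives no upper bound, so $|v_r|$ could dwarf the first-disagreement radius, the translated agreement ball $-v_r+(B_r\times H)$ recedes to infinity, and your limit pair $\L_1\neq\L_2$ arrives with no common region at all --- the half space $H_0$, and with it proximality, evaporates, and a merely distinct pair contradicts nothing. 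The repair is to invoke the \emph{proof} of Lemma~\ref{lem-zwei} rather than its statement: there $v$ is produced at distance at most $2R+\sqrt{f}+1$ from a first-disagreement point at radius $r_M\geq r$ and lies inside $U_{r_M}\times H$, so $r_M-(2R+\sqrt{f}+1)\leq |v_r|\leq r_M$ and tangency holds up to a uniform constant; with that, your subnet, half-space and proximality steps go through. (A further small elision, which you share with the paper's own proof: Lemma~\ref{lem-eins} concerns forcing via \emph{containment} of a patch, while you need a $\Gamma_r\neq\L$ \emph{coinciding} with $\L$ on $B_r\times H$; for FLC repetitive sets this coincidence version follows from minimality of the transversal together with Lemma~\ref{lem-top-flc}, but it deserves a line.)
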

\begin{proof} Assume the contrary i.e.\ assume that $(X(\L),G)$ is equicontinuous.
Then the induced partial action of $G$ on $\Xi(\L)$ is equicontinuous.
By Lemma \ref{lem-top-flc} this means that
for any compact $C\subset G$ there exists a compact $C'$ such that
for all $\L_1,\L_2$ in $\Xi(\L)$, $ \L_1\cap C'=  \L_2\cap C'$ implies
$$ (-v + \L_1) \cap C = (-v + \L_2) \cap C$$
for all $v\in \L_1\cap \L_2$. Thus, if we chose  $C$ as in the previous lemma we obtain that $\L_1 = \L_2$ whenever $\L_1 \cap C' = \L_2\cap C'$. In particular, the patch $P:= \L_1\cap C'$ forces the Delone set $\L$. By Lemma \ref{lem-eins}, we then obtain that $\L$ is completely periodic. This is a contradiction.
\end{proof}

As a consequence of this corollary, we obtain the following characterization of equicontinuous Delone dynamical systems with finite local complexity.

\begin{theorem} \label{thm-Delone-FLC} Let $G$ be a compactly generated group and $\L$ a Delone set in $G$ with finite local complexity. Then, the following assertions are equivalent:

\begin{itemize}
\item[(i)]  $\L$ is completely periodic.

\item[(ii)] $(X(\L),G)$ is equicontinuous.
\end{itemize}
\end{theorem}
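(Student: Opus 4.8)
The plan is to prove the two implications separately, drawing on the machinery already assembled: the measure-theoretic characterization of equicontinuity in Theorem \ref{thm-Delone-ec-general} for the direction $(i)\Rightarrow(ii)$, and the Barge--Olimb type obstruction in Corollary \ref{cor-main} for the converse $(ii)\Rightarrow(i)$.

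For $(i)\Rightarrow(ii)$ I would argue that complete periodicity forces each associated function $f_{\L,\varphi}$ to be Bohr almost periodic. Let $P=\{t\in G: t+\L=\L\}$ be the relatively dense group of periods. For any $\varphi\in C_c(G)$ the function $f_{\L,\varphi}$ is continuous and bounded, because $\L$ is Delone and hence $\delta_\L$ is translation bounded, and it is invariant under translation by every $p\in P$: since $\L-p=\L$, one has $f_{\L,\varphi}(t+p)=\sum_{x\in\L}\varphi(t+p-x)=\sum_{y\in\L}\varphi(t-y)=f_{\L,\varphi}(t)$. Thus every $p\in P$ is an exact period, so the set of $t$ with $\|\delta_t\ast f_{\L,\varphi}-f_{\L,\varphi}\|_\infty\leq\epsilon$ contains $P$ and is therefore relatively dense for every $\epsilon>0$. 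Hence $f_{\L,\varphi}$ is Bohr almost periodic, and the equivalence $(i)\Leftrightarrow(iv)$ of Theorem \ref{thm-Delone-ec-general} gives that $(X(\L),G)$ is equicontinuous.

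For $(ii)\Rightarrow(i)$ the strategy is to place ourselves in the hypotheses of Corollary \ref{cor-main} and then invoke its contrapositive. Assuming $(X(\L),G)$ equicontinuous, I would first note that the system is transitive, since by construction the orbit of $\L$ is dense in $X(\L)$; and a transitive equicontinuous system is minimal, as recorded just before Theorem \ref{cor-equi}. Because $\L$ has finite local complexity, minimality of $(X(\L),G)$ is equivalent to repetitivity of $\L$, so $\L$ is repetitive. Now suppose, for contradiction, that $\L$ is aperiodic. Then $\L$ is a repetitive aperiodic Delone set of finite local complexity on a compactly generated group, so Corollary \ref{cor-main} applies and tells us that $(X(\L),G)$ is \emph{not} equicontinuous, contradicting our assumption. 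Therefore $\L$ is completely periodic.

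The only genuine content of the argument lies in Corollary \ref{cor-main}, and ultimately in Lemma \ref{lem-zwei}, where compact generation of $G$ is used to produce, from any two distinct elements of the hull agreeing at the origin, a uniformly bounded witness to their difference; this in turn forces a finite patch to determine the whole set and, via Lemma \ref{lem-eins}, contradicts aperiodicity. Given that input, the present theorem is essentially a matter of correctly routing the hypotheses, and the two points I would check most carefully are the reduction from equicontinuity to repetitivity (transitivity together with equicontinuity yields minimality, and minimality is equivalent to repetitivity under FLC) and the elementary observation that a continuous bounded function with relatively dense period group is Bohr almost periodic.
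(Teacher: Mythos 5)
Your proposal is correct and follows essentially the same route as the paper: the direction (ii)$\Rightarrow$(i) is handled exactly as in the paper's proof, namely by deducing repetitivity from the fact that a transitive equicontinuous system is minimal (with minimality equivalent to repetitivity under FLC) and then invoking the contrapositive of Corollary \ref{cor-main}. Your (i)$\Rightarrow$(ii) argument simply fills in, via the period group and Theorem \ref{thm-Delone-ec-general}, the implication the paper dismisses as clear, which is a sound and welcome elaboration rather than a different approach.
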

\begin{proof}
The implication (i)$\Longrightarrow $ (ii) is clear (and does not depend on  FLC).
It remains to show the implication (ii)$\Longrightarrow$ (i).
By the previous Corollary \ref{cor-main}, it suffices to show that $\L$ is repetitive. This, in turn,  follows from general principles as each transitive equicontinuous system must be minimal. We include the short proof for the convenience of the reader: By general principles, there will exist a minimal component $(Y,G)$ of $(X(\L),G)$. Let $\L^{'}$ be an element of $Y$. Now, as $\L^{'}$ belongs to the hull of $\L$, there exists a net $(t_i)$ in $G$ with $t_i + \L$ converging to $\L^{'}$. By the equicontinuity assumption  (ii), we infer that then $-t_i + \L^{'}$ must converge to $\L$. This shows that $\L$  belongs to   $Y$ and hence is repetitive.
\end{proof}

\medskip

\textbf{Remarks.} (a)  The theorem can also be phrased in terms of the proximality relation.

(b) The preceding theorem can be combined with Theorem \ref{thm-Delone-ec-general} to give various further characterizations of  complete periodicity of a Delone set with finite local complexity. We single out one of these in the next corollary.

\bigskip

The  previous theorem allows us to give an affirmative answer to a question  of Lagarias (under the additional assumption of finite local complexity):

\begin{cor} \label{cor-Lagarias-true} Let $G$ be a compactly generated group and $\L$ a Delone set in $G$ with finite local complexity. Then, the following assertions are equivalent:
\begin{itemize}
\item[(i)] $\L$ is completely periodic.

\item[(ii)] $\delta_\L$ is strongly almost periodic.
\end{itemize}
\end{cor}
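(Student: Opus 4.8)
The plan is to recognize this corollary as an immediate splicing together of the two principal theorems already in hand, with equicontinuity of the hull $(X(\L),G)$ serving as the common pivot. Since the hypotheses of the corollary — finite local complexity of $\L$ together with compact generation of $G$ — are exactly those of Theorem \ref{thm-Delone-FLC}, no fresh geometric input is needed; the entire content has already been absorbed into the earlier results.

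First I would apply Theorem \ref{thm-Delone-FLC}, which under precisely these hypotheses yields that $\L$ is completely periodic if and only if $(X(\L),G)$ is equicontinuous. This gives the equivalence of statement (i) with equicontinuity of the hull. Next I would invoke Theorem \ref{thm-Delone-ec-general}, whose equivalence of (ii) and (iv) states that, for any Delone set in a locally compact $\sigma$-compact abelian group, $\delta_\L$ is strongly almost periodic if and only if $(X(\L),G)$ is equicontinuous. Here one uses that a compactly generated group is automatically $\sigma$-compact (as noted just before Lemma \ref{lem-zwei}), so the theorem does apply. This gives the equivalence of statement (ii) with equicontinuity of the hull.

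Chaining the two equivalences through the shared intermediate condition — equicontinuity of $(X(\L),G)$ — delivers (i) $\Longleftrightarrow$ (ii) and completes the argument. There is no real obstacle at this stage: the substance lives entirely in the preceding theorems, namely the Barge--Olimb-style proximality argument underlying Theorem \ref{thm-Delone-FLC} (via Corollary \ref{cor-main}) and the measure-dynamical characterization of Theorem \ref{char-ap} feeding Theorem \ref{thm-Delone-ec-general}. The only point meriting a moment's care is confirming that the hypotheses of both invoked theorems hold simultaneously, which they do.
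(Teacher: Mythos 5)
Your proof is correct and is exactly the paper's own argument: the published proof reads simply ``Combine the previous theorem with Theorem \ref{thm-Delone-ec-general}'', i.e.\ it chains (i) $\Leftrightarrow$ equicontinuity of $(X(\L),G)$ from Theorem \ref{thm-Delone-FLC} with (ii) $\Leftrightarrow$ equicontinuity from Theorem \ref{thm-Delone-ec-general}, just as you do. Your added check that compact generation guarantees $\sigma$-compactness, so both theorems apply simultaneously, is a sound (if implicit in the paper) verification of the hypotheses.
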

\begin{proof} Combine the previous theorem with Theorem  \ref{thm-Delone-ec-general}.
\end{proof}

\bigskip

\textbf{Remark.}
It is not difficult to see that Lemma \ref{lem-eins} and Lemma \ref{lem-zwei} remain true for colored Delone sets. This allows one to obtain an analogue of the preceding  corollary for colored Delone sets. We leave the details to the reader.

\section{A negative answer to the question of Lagarias  for systems  without  finite local  complexity}\label{How-Failure}
In this section we show that Lagarias question does not have an affirmative answer if the assumption of finite local complexity is dropped. More precisely, we provide  examples of strongly almost periodic Delone sets on the real line  which are not crystalline. The class of examples we give can clearly be generalized to arbitrary Euclidean spaces (and even to more general locally compact abelian groups). It may be of interest in further contexts as well. For this reason we give a characterization in the one dimensional situation discussed below.

\bigskip

Our examples arise as factors of the Kronecker flow on the two-dimensional torus.  By Corollary \ref{cor-ap} such a factor  automatically yields strongly almost periodic measures. These measures will be given by Dirac combs of certain Delone sets. Thus, it suffices to show that the Delone sets in question are not crystalline. This will be clear from the construction.  Here are the details:
Let  $\T :\R^2 /\Z^2$ be the two dimensional torus
and
$$p : \R^2 \longrightarrow \T,\;\:  p((x,y)) := (x,y) + \Z^2$$
be the canonical projection.
Fix an irrational $\theta \in (0,1)$ and define
$$ h=h_\theta  : \R \longrightarrow \R^2, h (t) = (t, \theta t)$$
and
$$\iota= \iota_\theta : \R \longrightarrow \T, \iota = p \circ h_\theta.$$
Then, $\iota$ induces an action of $\R$ on $\T$ via
$$t\cdot  \xi := \iota(t) + \xi$$
 and in this way $(\T,\R)$ becomes a rotation on a compact abelian group, known as \textit{Kronecker flow} (with slope $\theta$). Let now
$$g : [0,1]\longrightarrow \R^2$$
be a continuous curve with the properties:

\begin{enumerate}
\item $g$ is everywhere transversal to the direction defined by the line $\mbox{im} (h)$. More precisely,  there exists $\epsilon>0$ such that for all $(x,y)\in \mbox{im}(g)$ we have
$(x,y)+h(t)\in \mbox{im}(g)$ only if $t=0$ or $|t|>\epsilon$.
\item   $g(1)-g(0)\in\Z^2$.
\end{enumerate}
In particular
$$\varGamma := \Z^2 + \mbox{im}  (g)$$
is the image of a closed connected curve on the torus which is transversal to the Kronecker flow. We also suppose that
\begin{itemize}
\item[(3)]\label{item3} $\varGamma$ does not intersect itself.
\end{itemize}
For example, any  continuous strictly monotone $g : [0,1]\longrightarrow [0,1]$ with $ g(0)=1$ and $g(1) = 0$ will satisfy these assumptions (1), (2) and (3).
Now, we define the set
 $$ \L_{\varGamma,\theta} = \{ t\in \R :  (t,\theta t) \in \varGamma\} = h^{-1} (\varGamma).$$

\begin{cor}\label{cor-Lagarias-wrong}  Let $g$ be non-linear and $\theta\in (0,1)$ irrational. Then, $\L_{\varGamma,\theta}$ is Delone set with  a strongly almost periodic Dirac comb which is not completely periodic.
\end{cor}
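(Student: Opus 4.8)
The plan is to realize $\L_{\varGamma,\theta}$ as the set of return times of the Kronecker orbit through the origin to the transversal $S:=p(\varGamma)$, a simple closed curve in $\T$, and to promote this description to a factor map in the sense of Corollary \ref{cor-ap}. Concretely, for $\xi\in\T$ set
$$\L_\xi := \{t\in\R : \iota(t)+\xi\in S\},$$
so that $\L_e=\L_{\varGamma,\theta}$, and define $\pi:\T\to\MTB$ by $\pi(\xi):=\delta_{\L_\xi}$. A direct computation gives $\L_{\iota(s)+\xi}=-s+\L_\xi$, i.e.\ $\pi(s\cdot\xi)=\alpha_{-s}\pi(\xi)$; composing with the group inversion $\xi\mapsto-\xi$ (which fixes $e$ and intertwines the rotation action with its inverse) yields a continuous map satisfying the exact equivariance and normalisation required by Corollary \ref{cor-ap}(ii). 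Granting that each $\L_\xi$ is uniformly discrete with a common packing constant and that $\pi$ is well defined and continuous into $\MTB$, Corollary \ref{cor-ap} immediately gives that $\delta_{\L_{\varGamma,\theta}}$ is strongly almost periodic. It then remains to supply the geometric inputs and to rule out periodicity.

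The geometric heart of the argument is that $S$ is a global cross-section of the Kronecker flow with continuous, uniformly bounded first-return data. Since the flow is nowhere vanishing and $S$ is everywhere transversal to it, $S$ cannot be null-homotopic; hence it is essential (its class is $g(1)-g(0)\in\Z^2\setminus\{0\}$), all crossings have the same sign, and for a linear flow on $\T^2$ this forces $S$ to be a global section. Transversality in the form of hypothesis (1), together with compactness of $S$, yields a uniform flow-box neighbourhood $S\times(-\delta,\delta)$, from which I read off: a uniform lower bound $\delta$ on return times (hence uniform discreteness of every $\L_\xi$ with a common constant), a uniform upper bound on return times (hence relative density of $\L_e$, so $\L_{\varGamma,\theta}$ is Delone), and continuous dependence of crossing points on $\xi$ (hence vague continuity of $\pi$, after choosing windows whose boundaries avoid crossings). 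I expect this to be the main obstacle: because $g$ is only assumed continuous, the flow-box and cross-section statements cannot be produced by the implicit function theorem and must be extracted from the topological transversality (1) and compactness, e.g.\ through the theory of continuous sections of flows.

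With strong almost periodicity in hand, the final point is that $\L_{\varGamma,\theta}$ is not completely periodic when $g$ is non-linear. Here I would use that the first-return map $\rho:S\to S$ of the minimal linear flow to a global section is a minimal orientation-preserving circle homeomorphism, hence conjugate to an irrational rotation, and that the successive gaps of $\L_{\varGamma,\theta}$ are exactly the values $r(\rho^k(x_0))$ of the continuous return-time function $r:S\to(0,\infty)$ along a $\rho$-orbit $\{\rho^k(x_0)\}$, which is dense in $S$. A period $\tau>0$ (so $\L_{\varGamma,\theta}=\tau+\L_{\varGamma,\theta}$) would make the two-sided gap sequence $N$-periodic for some $N\geq1$, forcing $r\circ\rho^N=r$ on the dense $\rho$-orbit and hence, by continuity and minimality of $\rho^N$, forcing $r$ to be constant. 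A constant return time $r_0$ means $\L_{\varGamma,\theta}=t_0+r_0\Z$, so its crossing points are dense in a proper closed one-parameter subgroup of $\T$; as $S$ is a simple closed curve containing this dense set, $S$ must be a geodesic circle and $g$ linear, contradicting the hypothesis. Thus non-linearity of $g$ excludes every period, $\L_{\varGamma,\theta}$ is aperiodic, and combined with the strong almost periodicity established above this proves the corollary.
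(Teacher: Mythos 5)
Your proposal is correct, and it divides cleanly against the paper's own proof: the almost-periodicity half is essentially the paper's argument, while the aperiodicity half takes a genuinely different route. Your $\pi(\xi)=\delta_{\L_\xi}$ is exactly the paper's map $\widetilde{\pi}$ pushed down to $\T$, and both arguments conclude via Corollary \ref{cor-ap}; you are in fact more careful than the paper on two points, namely you notice that the natural equivariance comes out as $\pi(s\cdot\xi)=\alpha_{-s}\pi(\xi)$, with the opposite sign to what Corollary \ref{cor-ap}(ii) demands, and repair it by composing with inversion (the paper applies the corollary without comment), and you supply the flow-box and global-section facts behind the paper's one-line assertions that $\widetilde{\pi}(x,y)$ is translation bounded, that $\pi$ is continuous, and that $\L_{\varGamma,\theta}$ is Delone — rightly flagging that with $g$ merely continuous these must come from the topological transversality hypothesis (1) plus compactness rather than any implicit-function-theorem argument. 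For aperiodicity the paper argues algebraically: after translating so that $0\in\L_{\varGamma,\theta}$, the group $P$ of periods lies inside $\L_{\varGamma,\theta}$, so the closure of $\iota(P)$ is a closed subgroup of $\T$ trapped inside the curve $p(\varGamma)$; irrationality of $\theta$ makes it non-discrete, hence a circle, and connectedness of $p(\mbox{im}(g))$ forces the curve to be that circle, i.e.\ $g$ linear. You instead run circle dynamics on the section: a period makes the gap sequence periodic, so the return-time function satisfies $r\circ\rho^N=r$ on a dense orbit and is constant by minimality of $\rho^N$ (via the Poincar\'{e} classification of minimal circle homeomorphisms), whence $\L_{\varGamma,\theta}$ is an arithmetic progression whose crossing points accumulate on a circle coset inside the curve. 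Both roads end at the same geometric punchline, and both in fact exclude any single nonzero period, not just complete periodicity; your route costs more machinery (global cross-section, minimality and conjugacy of the return map, continuity of $r$) but yields the return-time description of the gaps as a by-product, whereas the paper's subgroup argument is shorter and bypasses section dynamics entirely. One small gloss on your side: the closure of the cyclic group generated by $\iota(r_0)$ is a priori all of $\T$ or a finite union of parallel circles rather than a one-parameter subgroup; both extra cases die immediately ($\T$ cannot lie in a nowhere dense curve, and a simple closed curve contains at most one circle coset), and the paper glosses the identical case distinction in its own step ``$S$ must be a circle''.
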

\begin{proof} The construction directly gives that  $\L_{\varGamma,\theta}$ is a Delone set:
Since $\varGamma$ is a closed curve transversal to the Kronecker flow
the set $\L_{\varGamma,\theta}$ is relatively dense and since $\varGamma$ does not intersect itself
$\L_{\varGamma,\theta}$ is uniformly discrete.

\smallskip

We now show that   complete periodicity of
$\L_{\varGamma,\theta}$   implies that $g$ is  linear:  Assume without loss of generality that $0$ belongs to $\L_{\varGamma, \theta}$. Then, the orbit $P$  of $0$ under the set of periods of $ \L_{\varGamma, \theta}$ is just  this set of periods and  hence  a group. Let $S^{'}$ be the image of $P$ under the map $\iota$. Then, $S^{'}$ is a subgroup of $\T$. Furthermore by construction $S^{'}$ is contained in the compact image of $\mbox{im} (g)$ under $p$.  Denote the closure of $S^{'}$ by $S$. Then, $S$ is   again is a subgroup and  contained in the image of $\mbox{im} (g)$ under $p$. Hence, $S$ can not be $\T$. On the other hand, as $\theta$ is irrational, the subgroup $S$ of $\T$ must contain an accumulation point. Thus, $S$ can not be discrete. Thus, $S$ must be a circle. The  image of $g$ under $p$ must then contain this circle. As this image is connected  it must then agree with this circle. Hence, $g$ is linear.

\smallskip

It remains to show the claim on strong almost periodicity:
Define the map $\widetilde{\pi}$ from $\R^2$ to the measures on   $\R$ via
$$\widetilde{\pi} (x,y) := \sum_{ t : (x,y) + h(t) \in \varGamma} \delta_t.$$
In particular, $\widetilde{\pi} (0,0) = \delta_{\L_{\varGamma,\theta}}$.
Then, $\widetilde{\pi}$ can easily be seen to have the following three properties:
\begin{itemize}
\item $\widetilde{\pi} (x,y)$ is translation bounded (as $\varGamma$ is transversal to the Kronecker flow).
\item  $\widetilde{\pi}$ is continuous (as $\varGamma$ is a continuous closed curve).
\item $\widetilde{\pi} ( (x,y) ) = \widetilde{\pi}( (x,y) + (n,m))$ for all $(x,y)\in\R^2$ and $(n,m)\in \Z^2$ (as our construction is invariant under shifts by $\Z^2$).
 \item $\widetilde{\pi} ( h(t) + (x,y)) = \alpha_{-t} \widetilde{\pi} (x,y)$ (as follows from a direct computation).
\end{itemize}
Thus, $\widetilde{\pi}$ induces a continuous map $\pi :\T \to \mathcal M^\infty(\R)$
with $\pi (t \cdot \xi) = \alpha_{-t}  \pi (\xi) $ for all $\xi \in\T$ and $t\in\R$.
By Corollary \ref{cor-ap}, the measure $\pi((0,0) + \Z^2)=\delta_{\L_{\varGamma,\theta}}$ is then strongly almost periodic.
In this way, we have therefore  constructed a Delone set $\L_{\varGamma,\theta}$ whose Dirac comb is strongly almost periodic.
\end{proof}


To complete the above discussion we determine the dynamical system $(X,\R)$ of $\L_{\varGamma,\theta}$,
or, what is the same, $\delta_{\L_{\varGamma,\theta}}$. By
Corollary~\ref{cor-ap} $\pi$ is a group homomorphism onto its image and this image is $X$. Let $K=\ker\pi$.
Then $(X,\R)$ is the rotation on $\T/K$ induced by the Kronecker flow.

Clearly $\pi((x,y)+(k_1,k_2)) = \pi((x,y))$ if and only if  $(k_1,k_2)+\varGamma = \varGamma$ i.e.\ $K$ is the stabiliser of $\varGamma$.
Given that $K$ is a closed subgroup of $\T^2$ we can distinguish essentially two possibilities depending on whether it is discrete or not.
As $\varGamma$ is connected and non intersecting  $K$ must be a cyclic sub-group of the torus in the discrete case, or a circle in the other case.

If $K$ is not discrete it actually coincides with a translate of $\varGamma$. It then must intersect the line $\mbox{im}(h)$ more then once. It
follows that the restriction of $\pi$ to the line (which corresponds to the orbit of $\nu$) is not injective.
This implies that
$\nu$ (and  $\L_{\varGamma,\theta}$) are completely periodic and so $\pi$ maps its orbit to $S^1$. Since $S^1$ is closed, $X=S^1$ in this case.
If $K$ is discrete then $X$ is a two dimensional torus again  and consequently $\nu$ is  aperiodic.

\medskip

\textbf{Remarks.} (a) Since the Kronecker flow is dense in $\T^2$ distinct subgroups $K$ yield systems which are not topologically conjugate.  The finite cyclic sub-groups of $\T^2$ thus classify the possible hulls of aperiodic Delone dynamical systems up to conjugacy which arise as factors of the Kronecker flow in the above manner.
Likewise this classification is given for periodic Delone sets by the sub-groups of $\T^2$ which are circles.

(b) If one drops the requirement that $g$ is continuous or allows for $g(1)-g(0)\neq \Z^2$,
then the above analysis completely breaks down. In fact, the Sturmian sequences associated to $\theta$ are obtained if one takes $\mbox{im}(g)$ to be the closed interval in the orthocomplement of $\mbox{im}(h)$ which is obtained by projecting (orthogonally) the unit cube onto that  orthocomplement. The resulting Delone set is
aperiodic and has FLC, and is  thus not uniformly almost periodic!

\section{A further look at Delone sets in Euclidean space}\label{Further}
Delone sets of Euclidean spaces are of particular importance for the theory both from the point of view of geometry and of physics. Thus, we have a closer look at these in this section. The main advantage is that the space of all
Delone sets of a given Euclidean space carries a well-known metric and with the help of this metric we
can formulate more directly  what it means for such a Delone set to be equicontinuous.

\bigskip

Let $\rho$ be the standard Euclidean metric on $\R^n$ i.e.
$$ \rho(x,y) = \left(\sum_{j=1}^n |x_j - y_j|^2\right)^{1/2}.$$
Denote the closed ball (with respect to this  metric)  around the origin with radius $r$ by $B_r$
and the open ball with $U_r$. Then, a subset $\L$ of $\R^n$ is a uniformly discrete if there exists an $r>0$ with
$$(x + U_r) \cap (y + U_r) = \emptyset$$
for all $x,y\in \L$ with $x\neq y$. The largest such $r$ is denoted by  $r_{\min}$ and called the \textit{packing radius} of $\L$. A subset $\L$ of $\R^n$ is  relatively dense  if there exists an $r>0$ with
$$ \R^n = \bigcup_{x\in \L} (x + B_r).$$
The smallest such $r$ is denoted by $r_{\max}$ and called the \textit{covering radius} of $\L$.

The intersection of $B_r$ with a set $\L$ is denoted by $B_r[\L]$ and called an \textit{$r$-patch}. The Hausdorff distance  on compact subsets of $\R^n$ is denoted by $d_H$. It is not hard to see that then
\begin{equation}\label{metric}
d (\L_1, \L_2) := \sup \{r >0 : d_H (B_r [\L_1] \cup \partial B_r, B_r [\L_2]\cup \partial B_r) \leq \frac{1}{r}\}
\end{equation}
gives a metric on the set of all Delone sets and that the induced topology  agrees with  the topology induced by considering the Delone sets as measures
\cite{BL,BL2,BHZ,FHK,LSto}.

We will be concerned with dynamical systems $(X,\R^n)$ with $X = X(\L)$ arising as the hull of a Delone set $\L$. Recall from Section \ref{Application} that the  canonical transversal or discrete hull of $\L$ denoted by $\Xi := \Xi (\L)$ is  defined by
$$\Xi :=\{ \L' \subset X(\L) : 0\in \L'\}.$$

The metric $d$ allows us now to use some notions introduced in Section~\ref{Maximal} for such dynamical systems $(X (\L), \R^n)$. This is discussed next:
For a Delone set $\L$, the set of  $r$ return vectors is  given by
\begin{eqnarray*} \Rr(\L,r) & = & \{a\in \R^n : d (\L, -a +  \L ) \leq \frac{1}{r}\}\\
 &=& \{a\in\R^n: d_H(B_r[\L]\cup\partial B_r, B_r[\L-a]\cup\partial B_r)\leq \frac{1}{r}\}.
 \end{eqnarray*}
(If the set $\Rr(\L,r)$  is is relatively dense for all $r>0$ the set $\L$ is called \textit{rubber repetitive}.
 For Delone sets with finite local complexity, of course,  rubber  repetitivity  coincides with repetitivity. In general, rubber  repetitive is equivalent to minimality of the dynamical system.)   A  Delone set $\L$ is  now uniformly almost periodic if for all $r>0$ the set
$$ A:= \bigcap_{\L^{'} \in X (\L)} \Rr(\L',r)$$
is relatively dense. Here comes our theorem:

\begin{theorem} \label{thm-Delone-ec} Let $\L$ be a Delone set in $\R^n$.
Then, the following assertions  are equivalent:
\begin{itemize}
\item[(i)] $\L$ is uniformly almost periodic.
\item[(ii)] For all $r>0$ the set $ \bigcap_{\L'\in\Xi(\L)} \Rr(\L',r)$ is relatively dense.
\item[(iii)] For all $r>0$ the set $ \bigcap_{x\in\L} \Rr(\L-x,r)$ is relatively dense.
\item[(iv)]  For all $\epsilon>0$ the set
$$ \bigcap_{x\in \L} \big( \L-x+B_\epsilon(0) \big)\cap
\bigcap_{x\in \L} \big( -\L+x+B_\epsilon(0) \big)
$$
is relatively dense.
\item[(v)] For all $\epsilon>0$ there exists a relatively dense set $A$ such that for all $a\in A$ there is a bijection $f_a:\L\to\L$ satisfying $|f_a(x)-(x+a)|\leq \epsilon$.
\item[(vi)] The dynamical system $(X (\L),\R^n)$   is equicontinuous.
\end{itemize}
\end{theorem}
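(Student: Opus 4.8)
The plan is to prove the single cycle (i)$\Rightarrow$(ii)$\Rightarrow$(iii)$\Rightarrow$(iv)$\Rightarrow$(i), to adjoin the equivalence (iv)$\Leftrightarrow$(v), and to treat (i)$\Leftrightarrow$(vi) separately. The implication (i)$\Leftrightarrow$(vi) is the only place where genuinely dynamical input enters: condition (i) is, verbatim, the assertion that $(X(\L),\R^n)$ is uniformly almost periodic in the sense of Section~\ref{Maximal}. Indeed, writing $-a+\L'=\alpha_{-a}\L'$ and using the second (Hausdorff) description of $\Rr(\L',r)$ from \eqref{metric}, the set $\Rr(\L',r)$ is exactly the set of return vectors of $\L'$ to the $\tfrac1r$-ball in the metric $d$; since relative density is insensitive to $a\mapsto -a$ and to replacing $\le\tfrac1r$ by $<\tfrac1r$, and since the families indexed by $r>0$ and $\epsilon>0$ are cofinal, (i) is precisely uniform almost periodicity, so Theorem~\ref{thm-Auslander-equi} yields the equivalence with (vi). The two opening implications (i)$\Rightarrow$(ii)$\Rightarrow$(iii) are free: since $\{\L-x:x\in\L\}\subseteq\Xi(\L)\subseteq X(\L)$, each successive intersection is indexed by a smaller family and hence defines a larger set, and relative density passes to supersets.

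For (iii)$\Rightarrow$(iv) I would unpack $\Rr(\L-x,r)$ through its Hausdorff description: $a\in\Rr(\L-x,r)$ says that the radius-$r$ patches of $\L$ about $x$ and about $x+a$ agree up to $\tfrac1r$, the sphere $\partial B_r$ absorbing the points that cross the window boundary. Evaluating this matching at the centre $x\in\L$ itself produces a point of $\L$ within $\tfrac1r$ of $x+a$, which is the inclusion $\L+a\subseteq\L+B_{1/r}$, the first half of (iv). For the second half, $\L\subseteq\L+a+B_{1/r}$, given a target $x_\ast\in\L$ I would use relative density to pick $x\in\L$ with $|x-(x_\ast-a)|\le r_{\max}$, and then read off the match of $x_\ast$ inside the radius-$r$ patch about $x+a$; for $r$ large compared with $r_{\max}$ this yields $y\in\L$ with $x_\ast\in y+a+B_{1/r}$. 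Thus the set of (iv) with $\epsilon=\tfrac1r$ contains $\bigcap_{x\in\L}\Rr(\L-x,r)$, and relative density transfers.

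The step (iv)$\Rightarrow$(i) is the conceptual heart, since it must turn a statement about $\L$ into one about the entire hull. The condition defining (iv), that $a$ be a two-sided $\epsilon$-almost period ($M+a\subseteq M+B_\epsilon$ and $M\subseteq M+a+B_\epsilon$), is invariant under translation of $M$ and closed under the local limits defining $X(\L)=\overline{\{t+\L:t\in\R^n\}}$; extracting convergent subsequences of the matching points shows it passes to every $\L'\in X(\L)$. Since two-sided $\epsilon$-closeness of $\L'$ and $\L'-a$ forces $a\in\Rr(\L',r)$ whenever $\epsilon\le\tfrac1r$ (again using $\partial B_r$ at the boundary), the relatively dense set furnished by (iv) for $\epsilon=\tfrac1r$ lies inside $\bigcap_{\L'\in X(\L)}\Rr(\L',r)$, which is therefore relatively dense; this is (i). For the separate equivalence (iv)$\Leftrightarrow$(v) the packing radius enters: given $\epsilon$ I would choose $\delta<\min(\epsilon,r_{\min})$ and, for $a$ in the relatively dense set of (iv) for this $\delta$, define $f_a(x)$ to be the point of $\L$ within $\delta$ of $x+a$; existence comes from $\L+a\subseteq\L+B_\delta$, while $\delta<r_{\min}$ together with the reverse inclusion gives uniqueness, injectivity and surjectivity, so that $f_a$ is the required bijection with displacement $\le\epsilon$. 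The converse reads both inclusions of (iv) off a given bijection and its surjectivity.

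The main obstacle I anticipate is the metric bookkeeping in (iii)$\Rightarrow$(iv): reconciling the return-vector/rubber-metric formulation --- with its boundary-sphere convention and the coupling of the tolerance $\tfrac1r$ to the window radius $r$ --- against the clean two-sided almost-period formulation, and in particular using the covering radius $r_{\max}$ to control the second inclusion. By contrast, the genuinely conceptual point is the closedness-and-translation-invariance argument in (iv)$\Rightarrow$(i), which is what lets a condition checked on $\L$ alone propagate across the whole hull and thereby feed Theorem~\ref{thm-Auslander-equi}.
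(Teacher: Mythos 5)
Your proof is correct, but it closes the equivalences along a genuinely different implication graph than the paper, so a comparison is worthwhile. Both treatments dispose of (i)$\Leftrightarrow$(vi) by Theorem~\ref{thm-Auslander-equi} (your sign/strictness caveat is harmless, and admits a one-line justification: since $X(\L)$ is translation invariant, substituting $a+\L'$ for $\L'$ shows $\bigcap_{\L'\in X(\L)}\Rr(\L',r)$ is symmetric under $a\mapsto -a$), and both prove (iii)$\Rightarrow$(iv) by essentially the same patch bookkeeping --- the paper extracts the second inclusion from a point $\tilde a\in\L$ within $\frac{1}{r}$ of $a$, you from a point of $\L$ within $r_{\max}$ of $x_\ast-a$; either works. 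The divergence is in how conditions on $\L$ alone are pushed back up to the hull. The paper proves (iii)$\Rightarrow$(ii) by approximating $\L'\in\Xi(\L)$ by sets $\L-x$ on $B_{cr}$-patches with a three-term triangle inequality and $c\to\infty$, proves (ii)$\Rightarrow$(i) by writing each element of $X(\L)$ as a translate of an element of $\Xi(\L)$ by some $x$ with $|x|\le r_{\max}$, and returns from (v) to (iii) by transporting $r$-patches with the bijection $f_a$ (its vaguest step, with the ``irrelevant difference near $\partial B_r$''). You instead take (i)$\Rightarrow$(ii)$\Rightarrow$(iii) as free monotonicity of intersections, make (v)$\Rightarrow$(iv) the trivial converse, and concentrate everything in the single new implication (iv)$\Rightarrow$(i): the two-sided condition $\L+a\subset\L+B_\epsilon$ and $\L\subset\L+a+B_\epsilon$ is translation invariant and, with the non-strict $\le\epsilon$, closed under local rubber limits, hence propagates to every $\L'\in X(\L)$ and gives $a\in\Rr(\L',r)$ for $\epsilon\le\frac{1}{r}$ via the boundary-sphere convention. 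The trade is real: the paper never leaves the world of finite patches, whereas you need the standard point-tracking facts for vague convergence of uniformly discrete sets (every point of the limit set is a limit of points; bounded convergent sequences of points of the approximating sets land in the limit set) to run the extraction; in exchange you eliminate the paper's two fiddliest estimates and make conceptually transparent why an almost period of $\L$ alone is automatically an almost period of the entire hull. Your quantitative choices in (iv)$\Rightarrow$(v) also check out, since distinct points of $\L$ are at distance at least $2r_{\min}$ and the set in (iv) satisfies $A=-A$, giving $f_{-a}$ as inverse of $f_a$.
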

\begin{proof} Denote the covering radius of $\L$ by   $r_{\max}$ and the packing radius by $r_{\min}$.

\smallskip

The equivalence (i)$\: \Leftrightarrow\:$(vi) is just Theorem \ref{thm-Auslander-equi}.

\smallskip

  (i)$ \:\Leftrightarrow\:$ (ii).
Clearly it suffices to consider large $r$. If $|x|\leq r_{\max}$ and $r$ is large then $\Rr(\L-x,r)\subset \Rr(\L,r-r_{\max})$. It follows that  $ \bigcap_{\L'\in X(\L)} \Rr(\L',r)$ is relatively dense  (for all $r>0$) if and only if  $ \bigcap_{\L'\in\Xi(\L)} \Rr(\L',r)$ is relatively dense (for all $r >0$).

\smallskip

(ii)$ \:\Rightarrow\:$ (iii).
This is clear, since $\{\L-x:x\in\L\}$ is a subset of $\Xi(\L)$.

\smallskip

 (iii) $ \:\Rightarrow\:$ (ii).
As the  discrete hull $\Xi(\L)$ is the closure of the set
$\{\L-x:x\in\L\}$, we can find  for any  $r>0$, $ \L'\in\Xi(\L)$ and $ c>1$  an $ x\in\L$ with
$$d_H(B_{cr}[\L-x],B_{cr}[\L'])\leq\frac1{cr}.$$

Let $a\in\Rr(\L-x,r)$ and $|a|\leq (c-1)r$. Then, for $\L'$ and $x$ as above, \begin{eqnarray*}d_H(B_r[\L'-a],B_r[\L'])& \leq &
d_H(B_r[\L'-a],B_r[\L-x-a])\\
& & +d_H(B_r[\L-x-a],B_r[\L-x]) \\& & +d_H(B_r[\L-x],B_r[\L'])\\
& \leq & \frac3r.
\end{eqnarray*}
 Thus $a\in \Rr(\L',\frac{r}3)$. It follows that
$\bigcap_{x\in\L} \Rr(\L-x,r)\cap B_{(c-1)r}(0)$ is contained in
$\bigcap_{\L'\in\Xi(\L)} \Rr(\L',\frac{r}3)$.
Letting $c\to\infty$ allows to conclude the desired statement.

\smallskip

 (iii)$ \:\Rightarrow\:$ (iv). Let $a\in \Rr(\L,r)$, that is $d(\L-a,\L)\leq \frac{1}{r}$.
From (\ref{metric}) we conclude  that $a\in \L+B_{\frac1r}(0)$ and hence $\Rr(\L,r)\subset \L+B_\frac{1}{r}(0)$. In particular there exist an  $\tilde a\in\L$ such that $|a-\tilde a|<\frac1r$. From $d(\L-a,\L)\leq \frac{1}{r}$ we then obtain
$$
d(\L - \tilde a  + a, \L- \tilde a) \leq |\tilde a - a|  + d(\L, \L - a) \leq \frac{2}{r}.$$
Thus, $a\in \Rr(\L,r)$ implies $-a\in \Rr(\L-\tilde a,\frac{r}2)$.
Hence  $$ \bigcap_{x\in\L} \Rr(\L-x,r) \subset
 \bigcap_{x\in \L} \big( \L-x+B_\frac{1}{r}(0) \big)
\cap   \bigcap_{x\in \L} \big(-\L+x+B_\frac{2}{r}(0) \big)$$ holds and the desired statement (iv) follows.

\smallskip

 (iv)$ \:\Rightarrow\:$ (v). We show that $A:=\bigcap_{x\in \L} \big( \L-x+B_\epsilon(0) \big)\cap
 \bigcap_{x\in \L} \big( -\L+x+B_\epsilon(0) \big)$ has the desired properties:
 Let $a \in A$  be given. Then, for each  $ x\in \L$ there exists an $y\in \L$ such that
 $$a+x \in y +B_\epsilon(0).$$
If $\epsilon>0$ is small enough, for instance smaller than $\frac{r_{\min}}2$,  then this $y$ is unique and this defines a function $f_a:\L\to\L$, $f_a(x) = y$ with $|f_a(x) - (a + x)|\leq \epsilon$. Clearly, for small enough $\epsilon >0$ such as e.g.  $0 < \epsilon < \frac{r_{\min}}{3} $ the function  $f_a$ is injective. Since $A=-A$ we can also construct  $f_{-a}$  which is easily seen to be the inverse of $f_a$.

\smallskip

(v) $\:\Rightarrow$ (iii).  Let $\epsilon>0$. Given any $r$-patch $P\subset \L$ we have $f_a(P) \subset \L$ and $d_H\big((P+a)\cap\partial B_r(0),f_a(P) \cup\partial B_r(0))\leq \epsilon$.   It follows that, if $P$ is an $r=\frac1\epsilon$-patch of $\L$ at $x$ then $ d_H\big((P-x)\cap\partial B_r(0),(f_a(P)-x-a) \cup\partial B_r(0)\big)\leq \frac1r$. Now since $f_a$ is bijective and $\epsilon$-close to the translation by $a$ it is, perhaps
up to an irrelevant difference near $\partial B_{r}(f_a(x))$,
 the $r$-patch of $\L$ at $f_a(x)$ we have  $ d_H\big((f_a(P)-x-a) \cup\partial B_r(0),
 B_r[\L-x-a]\cup\partial B_r(0) \big)\leq \frac1r$ and thus
 $a\in\Rr(\L-x,\frac{r}2)$.
\end{proof}

\textbf{Remarks.} (a) Note that the theorem above has some overlap with Theorem~\ref{thm-Delone-ec-general}.
Indeed, (v) of the above theorem easily gives the uniform almost periodicity of all functions of the form $\sum_{x\in \L} \varphi (\cdot  - x)$ with $\varphi \in C_c (\R^n)$ and this is statement (i) of Theorem~\ref{thm-Delone-ec-general} in the more restrictive case $G=\R^n$.

(b) The theorem can be used to provide a weak version of the so-called C\'{o}rdoba theorem \cite{Cord}. This theorem states that for a  uniformly discrete  set $\L$ of the form $\L = \cup_{j=1}^m \L_m$ and $w_1, \ldots, w_m \in \C$  the Fourier transform (taken in the sense of tempered distributions) of the generalized Dirac comb $\sum_{j=1}^m \sum_{x\in \L_j}  w_j \delta_x$ is a translation bounded pure point measure if and only if each $\L_j$ is a lattice. In our context, we can show that   for $\L$ with finite local complexity the Fourier transform of $\delta_\L$ (taken as a tempered distribution) is a pure point measure if and only if $\L$ is completely periodic. Here, the 'if' part is clear from the Poisson summation formula  and the 'only if' part follows from the theorem as  pure pointedness of the Fourier transform of a measure implies almost periodicity of the measure \cite{GdeL}.

\bigskip

\medskip

\textbf{Acknowledgments.}  Daniel Lenz would like to thank Nicolae Strungaru for most illuminating discussions on almost periodicity for measures and Meyer sets. Johannes Kellendonk would like to thank Marcy Barge for
explaining to him the importance of proximality for tiling systems.

\end{document}